\documentclass[11pt]{article}
\usepackage[a4paper,margin=1in]{geometry}
\usepackage[T1]{fontenc}
\usepackage[utf8]{inputenc}
\usepackage{lmodern}
\usepackage{microtype}
\usepackage{amsmath,amssymb,amsthm,mathtools}
\usepackage{booktabs,longtable,array,tabularx}
\usepackage{enumitem}
\usepackage{placeins}
\usepackage{tikz}
\usetikzlibrary{arrows.meta,positioning,calc,fit}
\usepackage{xcolor}
\usepackage{hyperref}
\hypersetup{colorlinks=true,linkcolor=blue!50!black,citecolor=blue!50!black,urlcolor=blue!50!black,%
  pdftitle={The crumby coloring conjecture for subcubic outerplanar graphs},%
  pdfauthor={Jozsef Pinter}}
\definecolor{BlueV}{RGB}{30,90,180}
\definecolor{RedV}{RGB}{190,45,45}
\tikzset{
  vblue/.style={circle,draw=BlueV,fill=BlueV!18,thick,inner sep=1.8pt,minimum size=6.5mm},
  vred/.style={circle,draw=RedV,fill=RedV!18,thick,inner sep=1.8pt,minimum size=6.5mm},
  vgray/.style={circle,draw=black!65,fill=black!8,thick,inner sep=1.7pt,minimum size=6.3mm},
  edge/.style={thick},
  chord/.style={thick,dashed},
  proofnode/.style={draw,rounded corners,align=center,minimum height=9mm,inner xsep=5pt,fill=black!3},
}
\newtheorem{theorem}{Theorem}[section]
\newtheorem{lemma}[theorem]{Lemma}
\newtheorem{proposition}[theorem]{Proposition}

\theoremstyle{definition}
\newtheorem{definition}[theorem]{Definition}

\newcommand{\B}{\mathrm B}
\newcommand{\R}{\mathrm R}
\newcommand{\calB}{\mathcal B}
\newcommand{\calC}{\mathcal C}
\newcommand{\calP}{\mathcal P}
\newcommand{\calR}{\mathcal R}
\newcommand{\real}{\operatorname{Re}}
\newcommand{\depth}{\operatorname{depth}}
\newcommand{\Ser}{\operatorname{Ser}}
\newcommand{\Chord}{\operatorname{Chord}}
\newcommand{\Dec}{\operatorname{Dec}}
\newcommand{\Root}{\operatorname{Root}}
\newcommand{\Tree}{\operatorname{Tree}}
\newcommand{\FL}{\mathrm{FL}}
\title{The crumby coloring conjecture for subcubic outerplanar graphs}
\author{%
J\'ozsef Pint\'er\textsuperscript{1,2}\thanks{Corresponding author. Email: \href{mailto:pinterj@edu.bme.hu}{\texttt{pinterj@edu.bme.hu}}.}\\[0.45em]
\small
\begin{tabular}{@{}c@{}}
\textsuperscript{1}Department of Stochastics, Institute of Mathematics,\\
Budapest University of Technology and Economics,\\
Egry J\'ozsef utca 1, 1111 Budapest, Hungary\\[0.25em]
\textsuperscript{2}HUN-REN--BME Stochastics Research Group,\\
Egry J\'ozsef utca 1, 1111 Budapest, Hungary
\end{tabular}}
\date{}
\begin{document}
\maketitle
\begin{abstract}
The red--blue vertex partitions now known as crumby colorings originate in a conjecture of Thomassen related to Wegner's conjecture on squares of planar graphs. In such a coloring, the blue vertices induce a graph of maximum degree at most one, while the red vertices induce a graph with no isolated vertices and no simple path with three edges. Bar\'at, Bl\'azsik and Dam\'asdi proved that every \(2\)-connected outerplanar graph of maximum degree at most three admits a crumby coloring, and conjectured that the \(2\)-connectivity assumption can be removed. We prove this conjecture: every finite simple subcubic outerplanar graph admits a crumby coloring. To prove the conjecture, we introduce a rooted grammar for subcubic outerplanar graphs. The grammar describes such graphs recursively using rooted branches and two-terminal path fragments; cyclic blocks are handled by deleting the root to obtain a path fragment. We correspondingly extend crumby colorings to crumby-admissible colorings: in a rooted branch the root, and in a path fragment the two terminals, are allowed to be temporary isolated red vertices. This relaxation makes induction along the grammar possible while retaining only finite boundary information. The induction reduces to verifying an explicit finite family of lower certificates, namely nonempty sets of boundary types and root states. The required verification has two parts: the family must be closed under all steps of the decomposition, and every certified completed branch must contain a final-legal root state, so that the temporary defect disappears and the resulting coloring is a genuine crumby coloring. This final step is computer-assisted: a stand-alone certificate checker, supplied with the paper, verifies the stated closure and crumby conditions for the supplied certificate. All structural reductions and the certificate-induction principle are proved by hand.
\end{abstract}
\begin{center}
\small\textbf{Mathematics Subject Classifications:} 05C15, 05C10, 68R10\\[0.25em]
\small\textbf{Keywords:} crumby coloring; outerplanar graph; subcubic graph; graph coloring; computer-assisted proof
\end{center}
\section{Introduction}\label{sec:intro}
Motivated by Wegner's conjecture \cite{Wegner} on squares of planar graphs, Thomassen conjectured that every \(3\)-connected cubic graph admits a red--blue vertex partition in which the blue vertices induce a graph of maximum degree at most one, while the red vertices induce a graph of minimum degree at least one and containing no path with three edges \cite{Thomassen}.\footnote{There is a small-order obstruction: the triangular prism \(C_3\mathbin{\square}K_2\), the Cartesian product of a \(3\)-cycle and an edge, is a \(3\)-connected cubic graph on six vertices and admits no such partition \cite[Remark~1.3]{barat_decomposition_2019}. In contrast, \(K_4\), \(K_{3,3}\), and the cube \(Q_3\) do admit such partitions. The conjecture is therefore stated for graphs on at least eight vertices \cite[Conjecture~1.2]{barat_decomposition_2019}.} Following Bar\'at \cite{barat_decomposition_2019}, such a partition is called a \emph{crumby coloring}. Bellitto, Klimo\v{s}ov\'a, Merker, Witkowski and Yuditsky showed that Thomassen's conjecture fails for \(3\)-connected cubic graphs in general \cite{BellittoEtAl}, which makes it natural to ask in which graph classes crumby colorings exist.

Bar\'at established the existence of crumby colorings for two families of generalized Petersen graphs and for all subcubic trees \cite{barat_decomposition_2019}. Bar\'at, Bl\'azsik and Dam\'asdi then proved that every \(2\)-connected outerplanar graph of maximum degree at most three admits a crumby coloring \cite[Theorem~8]{BBD}, and conjectured that the \(2\)-connectivity hypothesis is unnecessary: every outerplanar graph of maximum degree at most three should admit a crumby coloring \cite[Conjecture~9]{BBD}. We prove this conjecture in full.
\begin{theorem}[Main theorem]\label{thm:main}
Every finite simple subcubic outerplanar graph admits a crumby coloring.
\end{theorem}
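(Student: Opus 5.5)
We prove Theorem~\ref{thm:main} by induction along a rooted recursive decomposition of subcubic outerplanar graphs, carrying a relaxed notion of coloring with finitely many boundary states. We do not try to glue the $2$-connected result of \cite[Theorem~8]{BBD} block by block, since the cut vertices create interactions that such gluing does not control. First, we may assume $G$ is connected, because a disjoint union of crumby colorings is crumby. We root $G$ at a vertex $r$ and view it as a tree of blocks. Each block is either a bridge or a $2$-connected subcubic outerplanar graph, that is, a Hamiltonian cycle with non-crossing chords. A block hangs from the cut vertex closest to $r$. Deleting that attachment vertex from a cyclic block leaves a path $v_1\dots v_k$ along the outer cycle, with chords and pendant branches at the other vertices. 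This gives a grammar with two kinds of objects. A \emph{rooted branch} is a subgraph hanging below a root. A \emph{path fragment} is a two-terminal piece of the outer path: a vertex carrying a pendant branch, an edge, a series composition of fragments, or a chord closing a sub-path, which is a parallel composition. Since $G$ is subcubic, the root of a branch has small degree into the branch, and each path vertex has at most one extra attachment, either a chord or a pendant branch. So only finitely many production rules are needed.

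Second, we define \emph{crumby-admissible} colorings of each object. These are colorings that satisfy all crumby conditions except at the boundary vertices: the root of a branch, or the two terminals of a fragment. There, a red vertex may be temporarily isolated in red. The \emph{type} of such a coloring records the colors of the boundary vertices and their degrees in the induced blue or red subgraph. For red boundary vertices it also records the local structure of the red component, for example whether the vertex lies on a red $P_2$ or is the centre or a leaf of a red $P_3$/star. This is enough to decide whether an edge added later creates a blue vertex of degree $2$ or a red path with three edges. The type set $T$ is finite. To each object $X$ we associate its \emph{profile}, the set of types realized by crumby-admissible colorings of $X$. The core of the proof is to exhibit a finite family $\mathcal F$ of nonempty subsets of $T$, the lower certificates, with the following closure property. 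If the children of a production rule have profiles containing members of $\mathcal F$, then the profile of the result also contains a member of $\mathcal F$. This requires a composition lemma for each rule: the type of the glued coloring, and whether it is admissible, depend only on the children's types. Proving these lemmas is routine local bookkeeping. Given closure, structural induction on the grammar, with base cases a single vertex and a single edge, shows that every branch of $G$ has a certified profile.

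Third, at the global root $r$, we check that every certificate in $\mathcal F$ that can occur for a completed branch contains a \emph{final-legal} state. This is a state in which $r$ is not an isolated red vertex and all other constraints hold. A coloring realizing such a state is a genuine crumby coloring of $G$. We may choose $r$ conveniently, for instance a vertex of degree at most $2$, or a leaf if one exists, to limit the cases.

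The main obstacle is finding $\mathcal F$ and verifying closure. Individual types are not preserved under the rules, so the invariant has to be a family of type sets, and we would expect to search for it by fixpoint iteration. We start from the profiles of the base objects, repeatedly apply all rules, and keep only the minimal certified sets, until the family stabilizes. Then we check closure and final-legality exhaustively. This step is realistically computer-assisted, with a stand-alone checker. The hand-proved part is the correctness of the grammar, meaning that every subcubic outerplanar graph is generated, together with the composition lemmas and the induction principle. The hardest structural point is handling cyclic blocks through root deletion: reattaching the root to the two path terminals must be expressed in terms of the fragment's terminal types alone.
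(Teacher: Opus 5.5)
Your proposal is essentially the paper's proof: rooting at a vertex of degree at most two, the grammar of rooted branches and two-terminal boundary fragments (a cyclic block opened by deleting its root and closed again by root restoration), crumby-admissible colorings with temporarily red-isolated boundary vertices, and a computer-checked family of lower certificates that is closed under all constructions and whose branch members all contain a final-legal root state. The only bookkeeping you leave implicit is made explicit in the paper in two places: the boundary type must record whether the two terminals share a red component and, if so, its ordered shape (a red \(P_3\) with both terminals as leaves becomes a legal triangle under chord addition, whereas two separate red \(P_3\)'s with a terminal leaf each give a red path with five edges), and the certificates are indexed by the structural flags \((e_L,e_R,\ell)\), so that closure is demanded only for constructions that are structurally permitted.
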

The result is close to best possible. Although every outerplanar graph is \(K_4\)-minor-free, the analogous statement fails already for subcubic partial \(2\)-trees: there exist connected and \(2\)-connected examples with no crumby coloring, on \(18\) and \(40\) vertices respectively \cite{PinterK4}. Thus outerplanarity cannot be replaced merely by treewidth two.

We outline the proof. We work with rooted graphs and root a connected subcubic outerplanar graph at a vertex of degree at most two. At the root there are two possibilities. Either the root meets only bridges, and deleting it leaves at most two smaller rooted pieces; or the root lies on a cyclic block and uses its two boundary-cycle edges, and deleting it opens the block into a two-terminal interval along the outer boundary. Inside such an interval every vertex already spends two incidences on the boundary, so subcubicity leaves at most one further incidence, used either by a chord or by a bridge to a smaller rooted piece, and the chords form a noncrossing matching. Iterating this reduction expresses every rooted subcubic outerplanar graph through a recursive decomposition into rooted branches and two-terminal intervals.

To make induction compatible with cutting and later reassembling the graph, we work with a relaxed form of crumby coloring. A \emph{crumby-admissible coloring} satisfies the usual crumby conditions apart from the active boundary, but allows the root of a rooted branch, or the two terminals of an interval, to be temporary isolated red vertices. These are the only defects permitted, and they are recorded as part of the boundary data.

The coloring constraints then have only finite memory along an interval. What a partial coloring can still contribute to later stages is visible entirely at the two terminals: for a blue terminal, whether it already has a blue neighbor; for a red terminal whose red component avoids the other terminal, the length of a longest red path leaving it, which is \(0\), \(1\), or \(2\); and, when both terminals lie in one red component, which of the six possible ordered connected shapes that component has. This leaves only finitely many boundary types, and they are stable under the decomposition: two intervals with the same set of realizable boundary types behave identically under every step. Hence the interior of an interval may be replaced by its finite set of boundary types, and the existence proof becomes an induction over the decomposition that carries finitely many boundary types.

The proof has a computer-assisted finite-verification component. After the structural reduction, it remains to verify an explicit finite certificate: a family of nonempty sets of boundary types and root states that is closed under the recursive operations of the decomposition. The same certificate guarantees that the witnesses obtained for completed rooted branches include states in which no boundary defect remains. Thus the crumby-admissible coloring produced by the induction becomes a genuine crumby coloring of the original graph. We state this finite check as the Finite Verification Lemma \ref{lem:verification}. The certificate and a stand-alone checker are supplied with the paper, and the appendix gives additional details about the supplied codes.

\section{Recursive decomposition of rooted outerplanar graphs}\label{sec:decomp}
All graphs are finite and simple. A graph is \emph{subcubic} if every vertex has degree at most three.
\begin{definition}[Crumby coloring]\label{def:crumby}
Let \(G\) be a graph. A \emph{crumby coloring} of \(G\) is a map
\(c:V(G)\to\{\B,\R\}\). Write
\[
G_{\B}=G[c^{-1}(\B)] \qquad\text{and}\qquad G_{\R}=G[c^{-1}(\R)]
\]
for the blue and red induced subgraphs. The coloring \(c\) is \emph{crumby} if
\(
\Delta(G_{\B})\le 1,
\)
while \(G_{\R}\) has no isolated vertices and contains no simple path with three edges; this path is not required to be induced. With respect to the coloring \(c\), an edge is called \emph{blue} if both endpoints are blue, and \emph{red} if both endpoints are red.\footnote{We state the red condition vertex-wise on purpose. The phrase ``every red vertex has a red neighbor'' is unambiguous even when there are no red vertices, whereas the equivalent formulation ``the red induced subgraph has minimum degree at least one'' may be read as undefined for the empty graph. With the vertex-wise formulation, a coloring with no red vertices vacuously satisfies the red conditions; in particular, a single blue vertex is crumby, as needed when isolated vertices are colored blue in the proof of Theorem~\ref{thm:main}.}
\end{definition}
The decomposition works with two kinds of objects, rooted branches and two-terminal intervals, built from two base objects by five constructions. We define the objects first, then the constructions.
\begin{definition}[Rooted branch, two-terminal interval]\label{def:objects}
A \emph{rooted branch} \((G,r)\) is a connected subcubic outerplanar graph \(G\) together with a distinguished vertex \(r\) of degree at most two, called the \emph{root}.

A \emph{two-terminal interval} \((F,p,q)\) is a subcubic outerplanar graph \(F\) together with two ordered distinguished vertices \(p\) and \(q\), the \emph{left} and \emph{right terminal}, joined in \(F\) by a path along the outer boundary, the \emph{boundary path}. 

An interval models what remains of a cyclic block after the deletion of a root lying on it: \(p\) and \(q\) are the two boundary neighbors of the deleted vertex, and at each terminal one incidence is reserved for the edge to that vertex, to be restored only when a new root is attached. Consequently a terminal \(x\) spends one incidence on the boundary path and carries at most one further incidence inside \(F\), its \emph{spare incidence}; we set \(\sigma(x)=1\) if the spare incidence has been used, by a chord or by a bridge to an attached branch, and \(\sigma(x)=0\) if it is still free. Thus \(x\) has degree \(1+\sigma(x)\le 2\) in \(F\). Besides \(\sigma(p)\) and \(\sigma(q)\), we record whether the boundary path has length one or at least two, written \(\ell\in\{1,\ge 2\}\); this flag governs chord addition below, since in a simple graph no chord can join terminals that are already adjacent.
\end{definition}
\begin{definition}[The five constructions]\label{def:constructions}
The \emph{base objects} are the \emph{single rooted vertex}, which is a rooted branch, and the \emph{single-edge interval}: one boundary edge \(pq\) with terminals \(p\) and \(q\), both spare incidences free (\(\sigma(p)=\sigma(q)=0\)) and \(\ell=1\). Larger objects are built by the following constructions. Each is permitted only under the stated conditions; these enforce subcubicity and simplicity at the vertices the construction touches, and no other vertex is affected.
\begin{enumerate}[label=\textnormal{(O\arabic*)},leftmargin=2.7em,itemsep=0.3em]
\item \emph{Rooted bridge composition.} Given one or two rooted branches, add a new vertex \(r\) and join it by a bridge to the root of each. The result is a rooted branch with root \(r\): the new root has degree at most two, and each old root had degree at most two and gains exactly one incidence.
\item \emph{Root restoration.} Given a two-terminal interval \((F,p,q)\), add a new vertex \(x\) adjacent to both \(p\) and \(q\). The result is a rooted branch with root \(x\): the root has degree two, and each terminal receives exactly the incidence reserved for it, so its degree grows from \(1+\sigma\le 2\) to at most three.
\item \emph{Interval concatenation.} Given two intervals \((F_1,p_1,q_1)\) and \((F_2,p_2,q_2)\), identify \(q_1\) with \(p_2\) into a single vertex \(w\); the result is an interval with terminals \(p_1\) and \(q_2\). The identified vertex has degree
\[
(1+\sigma(q_1))+(1+\sigma(p_2))
\]
in the result, so the step is permitted precisely when \(\sigma(q_1)+\sigma(p_2)\le 1\), that is, when not both identified terminals have used their spare incidence. The outer terminals keep their spare-incidence data, and the new boundary path is the concatenation of the two old ones, so \(\ell\ge 2\).
\item \emph{Chord addition.} Given an interval \((F,p,q)\) with \(\sigma(p)=\sigma(q)=0\) and \(\ell \ge 2\), add the edge \(pq\). The first condition is forced by subcubicity, the second by simplicity: terminals at boundary distance one are already adjacent. Afterwards \(\sigma(p)=\sigma(q)=1\).
\item \emph{Terminal branch attachment.} Given an interval \((F,p,q)\), a terminal \(x\in\{p,q\}\) with \(\sigma(x)=0\), and a rooted branch \((G,s)\), join \(s\) to \(x\) by a bridge. The terminal uses its spare incidence, so afterwards \(\sigma(x)=1\); the root \(s\) had degree at most two and gains exactly one incidence.
\end{enumerate}
\end{definition}
Before stating the decomposition result, we recall the internal structure of a cyclic block; the facts about outerplanar graphs used here are standard. A \emph{block} of a graph is a maximal connected subgraph without a cutvertex; every block is a single edge, a \emph{bridge}, or a maximal \(2\)-connected subgraph, which we call a \emph{cyclic block} since it contains a cycle. Every subgraph of an outerplanar graph is outerplanar, so a cyclic block \(B\) has an outerplane embedding, in which the outer face is bounded by a Hamiltonian cycle of \(B\); every edge of \(B\) off this cycle is a \emph{chord}. Now suppose the ambient graph is subcubic. Each vertex of \(B\) spends two incidences on the boundary cycle, so it carries at most one incidence beyond it, used by a chord of \(B\) or by a bridge leaving \(B\), but never both. In particular the chords of \(B\) form a matching, and the outerplane embedding draws them without crossings: the chords of a cyclic block form a noncrossing matching. Figure~\ref{fig:block} shows a rooted cyclic block and the two-terminal interval obtained by deleting its root.
\begin{figure}[t]
\centering
\begin{tikzpicture}[scale=.96,
    branchroot/.style={circle,draw=black!65,fill=black!8,thick,inner sep=1.2pt,minimum size=4.6mm},
    subtree/.style={draw=black!50,fill=black!4,thick},
    term/.style={circle,draw=black!80,double,thick,fill=black!8,inner sep=1.7pt,minimum size=6.3mm}]
  \node[font=\small\itshape] at (0,2.05) {(a) a cyclic block rooted at $x$};
  \node[vgray,label={[font=\small]above:$x$}] (x) at (0,1.28) {};
  \foreach \i/\xx in {0/-3.5,1/-2.5,2/-1.5,3/-0.5,4/0.5,5/1.5,6/2.5,7/3.5} {
    \node[vgray] (v\i) at (\xx,0) {};
  }
  \node[font=\scriptsize] at (-4.05,0) {$v_0$};
  \node[font=\scriptsize] at (4.05,0) {$v_7$};
  \draw[edge] (x) to[out=210,in=65] (v0);
  \draw[edge] (x) to[out=-30,in=115] (v7);
  \foreach \i/\j in {0/1,1/2,2/3,3/4,4/5,5/6,6/7} \draw[edge] (v\i)--(v\j);
  \draw[chord] (v1) to[bend left=35] (v3);
  \draw[chord] (v5) to[bend left=35] (v7);
  \foreach \i in {0,2,6} {
    \node[branchroot] (r\i) at (v\i |- 0,-.95) {};
    \draw[edge] (v\i)--(r\i);
    \draw[subtree] (r\i.south) -- ++(-.34,-.6) -- ++(.68,0) -- cycle;
  }
  \begin{scope}[yshift=-4.6cm]
    \node[font=\small\itshape] at (0,1.75) {(b) deleting $x$ leaves a two-terminal interval};
    \foreach \i/\xx in {0/-3.5,1/-2.5,2/-1.5,3/-0.5,4/0.5,5/1.5,6/2.5,7/3.5} {
      \node[vgray] (w\i) at (\xx,0) {};
    }
    \node[term] at (w0) {};
    \node[term] at (w7) {};
    \node[font=\small] at (-3.5,.55) {$p=v_0$};
    \node[font=\small] at (3.5,.55) {$q=v_7$};
    \foreach \i/\j in {0/1,1/2,2/3,3/4,4/5,5/6,6/7} \draw[edge] (w\i)--(w\j);
    \draw[chord] (w1) to[bend left=35] (w3);
    \draw[chord] (w5) to[bend left=35] (w7);
    \foreach \i in {0,2,6} {
      \node[branchroot] (s\i) at (w\i |- 0,-.95) {};
      \draw[edge] (w\i)--(s\i);
      \draw[subtree] (s\i.south) -- ++(-.34,-.6) -- ++(.68,0) -- cycle;
    }
  \end{scope}
\end{tikzpicture}
\caption{(a) A cyclic block rooted at $x$. The root uses its two cycle edges, so every other boundary vertex has at most one spare incidence, used by one chord of the noncrossing matching or by one bridge to a rooted child branch (triangles). (b) Deleting $x$ opens the block into a two-terminal interval with terminals $p=v_0$ and $q=v_7$.}
\label{fig:block}
\end{figure}
\begin{proposition}[Structural decomposition]\label{prop:decomp}
Let \(H\) be a connected subcubic outerplanar graph, rooted at a vertex \(r\) of degree at most two. Then \((H,r)\) is generated from the base objects by finitely many applications of the constructions \textnormal{(O1)--(O5)}. More explicitly, every generated rooted branch is a single rooted vertex, or arises by rooted bridge composition \textnormal{(O1)} from one or two smaller rooted branches, or by root restoration \textnormal{(O2)} from a two-terminal interval; and every generated two-terminal interval arises from single-edge intervals by interval concatenation \textnormal{(O3)}, chord addition \textnormal{(O4)}, and terminal branch attachment \textnormal{(O5)}, subject to the conditions of Definition~\ref{def:constructions}.
\end{proposition}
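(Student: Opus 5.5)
We argue by strong induction on \(|V(H)|+|E(H)|\), proving simultaneously a companion statement for intervals: every two-terminal interval \((F,p,q)\) that arises from a cyclic block of a subcubic outerplanar graph by deleting a vertex on it, together with the pendant branches hanging off it, is generated from single-edge intervals by (O3)--(O5), with (O5) applied only to strictly smaller rooted branches. The base case is a single vertex. For a rooted branch \((H,r)\) with \(|V(H)|\ge 2\) we split on the blocks containing \(r\). Since \(\deg r\le 2\), the root lies either in one cyclic block and on no bridge, or only on bridges.

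\textbf{Root on bridges only.} Here \(r\) has one or two bridge neighbours \(s_1\) (and \(s_2\)). Deleting \(r\) leaves one or two components \(H_i\ni s_i\). Each \(s_i\) has degree at most two in \(H_i\), because it lost the incidence to \(r\). Each \((H_i,s_i)\) is therefore a smaller rooted branch; it is outerplanar as a subgraph of \(H\). By induction each is generated, and (O1) rebuilds \((H,r)\).

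\textbf{Root on a cyclic block \(B\).} Fix an outerplane embedding of \(B\) with Hamiltonian boundary cycle \(x=v_0',v_0,v_1,\dots,v_k\), where \(x=r\). Both edges at \(r\) are cycle edges, so \(r\) has no further incidences. Deleting \(r\) gives the interval \(F\) with terminals \(p=v_0\), \(q=v_k\) and boundary path \(v_0\cdots v_k\). Each \(v_i\) has exactly one spare incidence in \(H\), used by a chord, by a bridge to a pendant component, or not at all. A pendant component at \(v_i\), rooted at its neighbour of \(v_i\), is a rooted branch. That neighbour has degree at most two after the bridge is removed, and the branch is strictly smaller than \(H\). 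Then \((H,r)\) is obtained from \((F,p,q)\) by (O2), so it suffices to generate \(F\).

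\textbf{Generating the interval.} We induct on \(k\) and on the number of chords, using that the chords form a noncrossing matching on \(v_0,\dots,v_k\).
\begin{enumerate}[label=(\roman*)]
\item If \(k=1\), take a single-edge interval and apply (O5) at each terminal that carries a pendant branch. If \(v_0v_1\) were a chord as well, the graph would have a multi-edge, which is impossible.
\item If \(v_0v_k\) is a chord, then \(k\ge 2\) by simplicity. Remove it, build the rest recursively with \(\sigma(p)=\sigma(q)=0\), and apply (O4).
\item Otherwise look for a split vertex: an index \(0<j<k\) such that no chord joins \(\{v_0,\dots,v_{j-1}\}\) to \(\{v_{j+1},\dots,v_k\}\). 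Split at \(v_j\) into the intervals \(v_0\cdots v_j\) and \(v_j\cdots v_k\). Attach the spare incidence of \(v_j\), a chord or a branch, to whichever side contains its other end; if it is a pendant branch, put it on either side. Then at most one side uses it, the condition \(\sigma(q_1)+\sigma(p_2)\le 1\) of (O3) holds, and we recombine by (O3).
\end{enumerate}

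The main obstacle is showing that case (iii) always applies whenever \(k\ge 2\) and \(v_0v_k\) is not a chord.
\begin{enumerate}[label=(\alph*)]
\item If \(v_0\) has no chord, take \(j=1\); no chord can cross the cut because \(v_0\) has none.
\item If \(v_0\) has a chord \(v_0v_m\) with \(m<k\), take \(j=m\). By noncrossing, every chord with an endpoint strictly between \(v_0\) and \(v_m\) has its other endpoint in \([v_0,v_m]\). Since \(v_0\) is matched to \(v_m\), no chord from \(v_0\) reaches past \(v_m\).
\end{enumerate}
In both cases the two sides have shorter boundary paths, so the induction on \(k\) applies.

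It remains to check that every generated sub-interval satisfies the invariants of Definition~\ref{def:objects}: terminal degrees \(1+\sigma\le 2\), and a correct \(\ell\) flag. Then each use of (O3)--(O5) meets its permission condition, and the recursion terminates because every step decreases the size.
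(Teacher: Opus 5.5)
Your proof is correct and follows essentially the paper's route. You use (O1) when the root meets only bridges and (O2) after deleting a root on a cyclic block. The block is then generated by a nested induction on boundary length that either peels off $v_0v_1$ or splits at the chord partner of the left terminal, with your ``split vertex'' condition playing the role of the paper's internal chord-closedness. The only cosmetic difference is how the chord between the two terminals is handled. You treat it as a separate case (remove it, recurse, apply (O4)) under the lexicographic measure (length, number of chords). The paper instead builds $I[i+1,k]$, prepends $v_iv_{i+1}$, adds the chord, and appends $I[k,j]$ in one step, so boundary length alone suffices.
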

\begin{proof}
We argue by strong induction on the number of vertices: we show that every rooted branch \((H,r)\) is generated in the stated explicit form, assuming that every rooted branch with fewer vertices is. Inside the induction step a second, nested induction generates the required two-terminal intervals; its measure is the boundary length, and it is set up below where the intervals first appear. Every rooted branch to which the outer hypothesis is applied will be a proper subgraph of \(H\), hence smaller.

If \(H\) has one vertex, it is the single rooted vertex, a base object. Otherwise we distinguish two cases according to whether the root lies in a cyclic block.

\emph{Case 1: \(r\) lies in no cyclic block.} Then every edge incident with \(r\) is a bridge. For each such bridge \(rs\), let \(H_s\) be the component of \(H-rs\) containing \(s\), rooted at \(s\). Deleting the bridge removes one incidence of \(s\), so \(d_{H_s}(s)\le 2\) and \((H_s,s)\) is a rooted branch; it is a proper subgraph of \(H\), as it misses \(r\), so it is generated by the outer induction hypothesis. Since \(d_H(r)\le 2\), there are at most two bridges at \(r\), and applying rooted bridge composition (O1) to the branches \((H_s,s)\) recovers \((H,r)\).

\emph{Case 2: \(r\) lies in a cyclic block \(B\).} Since \(B\) is \(2\)-connected, \(r\) has at least two incidences in \(B\); since \(d_H(r)\le 2\), these are all incidences of \(r\) in \(H\), and they are the two boundary-cycle edges of \(B\) at \(r\). Hence \(r\) meets no bridge, no other block, and no chord of \(B\). Fix an outerplane embedding of \(B\) and write its outer boundary cycle as
\[
r,v_0,v_1,\ldots,v_m,r .
\]
Every boundary vertex \(v_i\) spends two incidences on this cycle and carries at most one spare incidence, used by a chord of \(B\) or by a bridge to a rooted branch hanging from \(v_i\), never both. Let \(F=H-r\), with terminals \(v_0\) and \(v_m\). Then \((F,v_0,v_m)\) is a two-terminal interval: its boundary path is \(v_0v_1\cdots v_m\), each terminal has lost its edge to \(r\) and so has degree at most two in \(F\), and the spare incidences of the terminals realize the data \(\sigma\). If \(F\) is generated, then root restoration (O2) applied to \(F\) reconstructs \((H,r)\). It therefore remains to generate \(F\).

Here the nested induction enters. For \(i<j\), call the consecutive set \(v_i,\ldots,v_j\) \emph{internally chord-closed} if no chord of \(B\) has one endpoint among the interior vertices \(v_{i+1},\ldots,v_{j-1}\) and the other endpoint outside \(v_i,\ldots,v_j\). Chords incident with one of the two terminals are allowed to leave the set; in the smaller interval they are simply pending external incidences and are not present. Let \(I[i,j]\) be the interval consisting of the boundary path \(v_i\cdots v_j\), all chords of \(B\) with both ends among \(v_i,\ldots,v_j\), and all rooted branches hanging from vertices among \(v_i,\ldots,v_j\), with terminals \(v_i\) and \(v_j\). If a terminal has a chord to a vertex outside the set, that chord is omitted from \(I[i,j]\) and the corresponding spare incidence is free there. The definition of internal chord-closedness ensures that no nonterminal vertex of \(I[i,j]\) has such an omitted chord. We prove, by induction on the boundary length \(j-i\), that \(I[i,j]\) is generated for every internally chord-closed pair \((i,j)\). The desired interval \(F\) is the instance \(i=0\), \(j=m\), and the full vertex set is internally chord-closed. Throughout, every hanging branch is rooted at the end vertex of its bridge, where it has root degree at most two, and it is a proper subgraph of \(H\); it is therefore generated by the outer induction hypothesis and may be attached by (O5) whenever the relevant spare incidence is free.

\emph{Base case: \(j-i=1\).} No chord has both ends in \(\{v_i,v_{i+1}\}\), since these vertices are already adjacent along the boundary. The interval \(I[i,i+1]\) is thus built from the single-edge interval \(v_iv_{i+1}\) by attaching the hanging branches at \(v_i\) and at \(v_{i+1}\), when present, by terminal branch attachment (O5). Each such attachment is permitted because the corresponding terminal has no chord using its spare incidence; if a chord leaves the terminal to the outside, then no hanging branch is present there and that pending chord is deliberately omitted.

\emph{Inductive step: \(j-i\ge 2\).} Consider the left terminal \(v_i\).

Suppose first that no chord joins \(v_i\) to a vertex of \(v_{i+1},\ldots,v_j\). Start from the single-edge interval \(v_iv_{i+1}\) and attach the hanging branch at \(v_i\), if present, by (O5). The pair \((i+1,j)\) is internally chord-closed: a chord from an interior vertex of \(v_{i+1},\ldots,v_j\) to \(v_i\) would contradict the present assumption, and a chord leaving \(v_i,\ldots,v_j\) would contradict internal chord-closedness of \((i,j)\). Hence \(I[i+1,j]\) has smaller boundary length and is generated by the inner hypothesis. Concatenating (O3) the single-edge piece with it yields \(I[i,j]\); the concatenation is permitted because the copy of \(v_{i+1}\) in the single-edge piece has a free spare incidence.

Suppose now that a chord \(v_iv_k\) joins \(v_i\) to a vertex of \(v_{i+1},\ldots,v_j\). Then \(i+2\le k\le j\) by simplicity. Since a vertex's single spare incidence is exhausted by a chord, neither \(v_i\) nor \(v_k\) carries a hanging branch or a second chord. The pairs \((i+1,k)\) and, when \(k<j\), \((k,j)\) are internally chord-closed. Indeed, a chord from an interior vertex of either smaller pair to a vertex outside that smaller pair either leaves the original interval \(v_i,\ldots,v_j\), is incident with \(v_i\) or \(v_k\) contrary to the matching property of the chords, or crosses the chord \(v_iv_k\) in the fixed outerplane embedding.

By the inner hypothesis, \(I[i+1,k]\) is generated. Start from the single-edge interval \(v_iv_{i+1}\) and concatenate (O3) it with \(I[i+1,k]\); this is permitted because the copy of \(v_{i+1}\) in the single-edge piece has a free spare incidence. The result is exactly \(I[i,k]\) with the chord \(v_iv_k\) omitted: nothing else is missing at \(v_i\), which carries no hanging branch and no other chord. Both terminal spare incidences of this interval are free, since the only spare use at \(v_i\) and at \(v_k\) is the chord itself, not yet present, and its boundary length \(k-i\) is at least two. Chord addition (O4) therefore inserts \(v_iv_k\), producing \(I[i,k]\). If \(k=j\), this is the required interval \(I[i,j]\). If \(k<j\), then \(I[k,j]\) is generated by the inner hypothesis; its left terminal \(v_k\) has the pending chord \(v_iv_k\) omitted and hence has free spare incidence. Concatenating (O3) \(I[i,k]\) with \(I[k,j]\) is therefore permitted, since the used spare incidence of \(v_k\) in the left piece is matched with a free one in the right piece, and the result is \(I[i,j]\).

Both inductions are well founded: each appeal to the inner hypothesis strictly decreases the boundary length, and each appeal to the outer hypothesis concerns a rooted branch with fewer vertices. Hence \(F\) is generated, and with it \((H,r)\), in the explicit form stated.
\end{proof}

In particular, since every nonempty outerplanar graph has a vertex of degree at most two, Proposition~\ref{prop:decomp} implies that, after choosing such a vertex as the root, every connected subcubic outerplanar graph is generated by the rooted constructions above.

\section{Boundary information for partial colorings}\label{sec:boundary}
We now describe the coloring information that must be retained at the terminals of an interval, and prove that it is sufficient. We begin with the possible shapes of a red component, used repeatedly below.
\begin{lemma}[Red components]\label{lem:redcomp}
In any crumby coloring of a subcubic graph, every red component is one of
\[
K_2,\qquad P_3,\qquad K_{1,3},\qquad K_3.
\]
Conversely, any coloring whose blue induced subgraph has maximum degree at most one, and each of whose red components is one of these four graphs, is crumby.
\end{lemma}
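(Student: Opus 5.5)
The plan is to classify the connected graphs that can occur as a red component. Let \(C\) be a red component, that is, a connected component of \(G_{\R}\). It is an induced subgraph of a subcubic graph, so \(\Delta(C)\le 3\). By the crumby conditions it has at least one edge, since it has no isolated vertex, and it contains no path with three edges. So it suffices to classify the connected graphs on at least two vertices with no \(P_4\) subgraph; subcubicity is then needed only to bound the size of a star.

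First I will show that \(C\) has at most one vertex of degree at least two, unless \(C\) is a triangle. Take a longest path in \(C\); it has one or two edges.

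\begin{itemize}
\item If it has one edge, then \(C=K_2\), since \(C\) is connected.
\item Otherwise let \(u\,v\,w\) be a longest path. Any neighbor \(y\notin\{u,v,w\}\) of \(u\) would give the path \(y\,u\,v\,w\) with three edges. The same holds for \(w\) by symmetry. So the neighbors of \(u\) and of \(w\) lie in \(\{u,v,w\}\).
\item If \(uw\) is an edge, then \(u,v,w\) induce a triangle. Any further neighbor \(z\) of \(v\) yields the path \(z\,v\,u\,w\), which is forbidden. By connectivity \(C=K_3\).
\item If \(uw\) is not an edge, then every vertex of \(C\) other than \(v\) has \(v\) as its only possible neighbor. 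Suppose some \(z\ne v\) had a neighbor other than \(v\); then one of \(z, v\) would lie at the end of a path \(\ldots v\,z\,\ldots\) of length three, using \(u\) or \(w\). So \(C\) is a star centered at \(v\) with at least two leaves.
\end{itemize}

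Subcubicity then forces \(d(v)\le 3\), so \(C\) is \(P_3=K_{1,2}\) or \(K_{1,3}\). The one step needing care is the claim that leaves of the star have no other neighbors. If a leaf \(z\) had a neighbor \(t\ne v\), then \(t\,z\,v\,u\) (or \(t\,z\,v\,w\) if \(z=u\)) is a path with three edges, because \(v\) has at least two neighbors. This is routine.

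For the converse, suppose the blue graph has maximum degree at most one and each red component is one of \(K_2,P_3,K_{1,3},K_3\). None of these has an isolated vertex. The longest path in each has at most two edges, and any path of \(G_{\R}\) lies inside a single component. So \(G_{\R}\) has no isolated vertex and no path with three edges, and the coloring is crumby by Definition~\ref{def:crumby}. I expect no genuine obstacle; the only point requiring attention is the exhaustive case split on the longest path.
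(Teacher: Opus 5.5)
Your proof is correct, but it is organized differently from the paper's.

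\textbf{The paper's route.} The paper splits on whether the red component $C$ is acyclic:
\begin{itemize}
\item If $C$ is a tree, having no three-edge path means diameter at most two, so $C$ is a star $K_{1,t}$, with $t\le 3$ by subcubicity.
\item If $C$ contains a cycle, every cycle must be a triangle, since a longer cycle already contains a three-edge path. Any vertex attached to the triangle would extend it to such a path, so $C=K_3$.
\end{itemize}

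\textbf{Your route.} You fix a longest path $u\,v\,w$ and split on whether $uw$ is an edge. This treats both shapes from a single object and avoids quoting the description of diameter-two trees. Your triangle case is then essentially the paper's. The paper's version is shorter because it outsources the acyclic case to that standard fact; yours is self-contained.

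\textbf{Tightening the star case.} The price of your route is some bookkeeping in the star case. Your sentence ``one of $z,v$ would lie at the end of a path \ldots'' is too vague to count as an argument. What actually works is your later step, applied to every neighbor $z$ of $v$, followed by an explicit appeal to connectivity.
\begin{itemize}
\item When $uw$ is not an edge, you have $N(u)=N(w)=\{v\}$.
\item For any other neighbor $z$ of $v$, a neighbor $t\neq v$ of $z$ cannot be $u$. Hence $t\,z\,v\,u$ is a genuine three-edge path on four distinct vertices, which is forbidden.
\item So every neighbor of $v$ has $v$ as its only neighbor, and $N[v]$ is closed under adjacency.
\item By connectivity $C=N[v]$, a star centered at $v$ with at least the two leaves $u$ and $w$.
\end{itemize}
Your parenthetical case $z=u$ is vacuous, since $u$ has no neighbor besides $v$.

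The converse direction is argued exactly as in the paper.
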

\begin{proof}
Let \(C\) be a red component. Then \(C\) is connected, has at least two vertices, since a one-vertex component would be an isolated red vertex, has maximum degree at most three by subcubicity, and contains no simple path with three edges; as in Definition~\ref{def:crumby}, the forbidden path need not be induced.

Suppose first that \(C\) is a tree. Two vertices at distance three are joined by a path with three edges, so \(C\) has diameter at most two, and a tree of diameter at most two is a star \(K_{1,t}\) with \(t\ge 1\). The degree bound at the center gives \(t\le 3\), so \(C\) is \(K_2\), \(P_3\), or \(K_{1,3}\).

Suppose now that \(C\) contains a cycle. A cycle on four or more vertices contains a simple path with three edges, so every cycle of \(C\) is a triangle; fix one, on the vertices \(x,y,z\). If \(C\) had any further vertex, then, since \(C\) is connected, some edge of \(C\) would join a vertex \(u\) outside the triangle to a vertex of the triangle, say to \(x\); but then \(u\,x\,y\,z\) would be a simple red path with three edges. Hence \(V(C)=\{x,y,z\}\), and \(C\) is the triangle \(K_3\).

Conversely, suppose that the blue induced subgraph has maximum degree at most one and that every red component is one of the four listed graphs. Each of these has at least two vertices, so no red vertex is isolated; and none of them contains a simple path with three edges, so neither does the red induced subgraph, since a path is confined to a single component. The coloring is therefore crumby.
\end{proof}
When an interval or branch is used inside a larger graph, a terminal or root may still acquire one further neighbor. We therefore allow one temporary defect: an exceptional vertex may be a red vertex with no red neighbor.
\begin{definition}[Crumby-admissible colorings]\label{def:admissible}
Let the \emph{exceptional vertices} of a rooted branch \((G,r)\) be its root, and those of a two-terminal interval \((F,p,q)\) be its two terminals. A coloring of a rooted branch or of a two-terminal interval is \emph{crumby-admissible} if the blue induced subgraph has maximum degree at most one, the red induced subgraph contains no simple path with three edges, and every red vertex, except possibly the exceptional ones, has a red neighbor. In a rooted branch, the status of the root is one of the five \emph{root states}:
\begin{center}\small
\begin{tabular}{@{}cl@{}}
\toprule
root state & meaning at the root \(r\)\\
\midrule
\(B_0\) & blue, with no blue neighbor\\
\(B_1\) & blue, with one blue neighbor\\
\(R_0\) & red, with no red neighbor (the temporary defect)\\
\(R_1\) & red, longest red path from \(r\) has length \(1\)\\
\(R_2\) & red, longest red path from \(r\) has length \(2\)\\
\bottomrule
\end{tabular}
\end{center}
\end{definition}
When a branch is completed, its root acquires no further neighbor, so the defect \(R_0\) is no longer allowed: a crumby-admissible coloring of a completed branch is crumby precisely when its root state lies in the \emph{final-legal} set
\begin{equation}\label{eq:finallegal}
\FL=\{B_0,\ B_1,\ R_1,\ R_2\}.
\end{equation}
For a red vertex \(v\), write \(\depth(v)\in\{0,1,2\}\) for the maximum number of edges on a simple red path starting at \(v\); crumby-admissibility forbids depth three or more, and \(\depth(v)=0\) marks an exceptional red vertex with no red neighbor.

Lemma~\ref{lem:redcomp} extends to crumby-admissible colorings in the following form: every red component with at least two vertices is one of \(K_2\), \(P_3\), \(K_{1,3}\), \(K_3\), while a one-vertex red component consists of an exceptional vertex. We use this extension repeatedly below.

The single rule for joining a branch to the rest of the graph across a bridge is summarized in Table~\ref{tab:bridge} and established in Lemma~\ref{lem:bridgestate} below. When a rooted branch with root \(r\) is attached to a vertex \(v\) by the bridge \(vr\), the table lists the legal child states and the contribution made at \(v\); the contribution must then be compatible with what is already present at \(v\), namely the blue degree at \(v\) must stay at most one and the longest red path through \(v\) must have at most two edges. Because the arms contributed at \(v\) lie in distinct child branches and are therefore internally disjoint, this last condition is exactly that its two largest such arms sum to at most two. The vertex \(v\) is permitted to be red-isolated only while it is itself still an exceptional root or terminal.
\begin{table}[ht]
\centering
\caption{Bridge-attachment rule. A rooted branch of the given root state is attached to a vertex \(v\) by one bridge.}
\label{tab:bridge}
\small
\begin{tabular}{@{}cll@{}}
\toprule
color of \(v\) & legal child state & contribution at \(v\)\\
\midrule
blue & \(B_0\) & one blue neighbor at \(v\)\\
blue & \(R_1,\ R_2\) & none (\(B_1,R_0\) forbidden)\\
red  & \(B_0,\ B_1\) & none\\
red  & \(R_0\) & a red arm of length \(1\) at \(v\)\\
red  & \(R_1\) & a red arm of length \(2\) at \(v\) (\(R_2\) forbidden)\\
\bottomrule
\end{tabular}
\end{table}
\begin{lemma}[Bridge attachment is state-determined]\label{lem:bridgestate}
Let a rooted branch \(H\) with root \(r\), carrying a crumby-admissible coloring with root state \(s\), be attached to a vertex \(v\) by the bridge \(vr\). Then the constraints that crumby-admissibility imposes at \(r\) and along the bridge, given the color of \(v\), and the contribution recorded at \(v\) in Table~\ref{tab:bridge} depend only on \(s\), not on the coloring of \(H\) realizing it. 
\end{lemma}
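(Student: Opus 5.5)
The proof is a case analysis on the color of \(v\) and the root state \(s\). The key structural fact is that the bridge \(vr\) is the only edge between \(H\) and the rest of the graph. Any blue edge, red edge or red path that uses vertices of both sides must therefore pass through \(vr\). Every effect of the attachment on crumby-admissibility is then local to \(r\), to \(v\), and to red paths crossing the bridge. We show that each such effect is a function of \(s\) (and of the color of \(v\)) alone, reading the root states from Definition~\ref{def:admissible} and using \(\depth\).

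\emph{Case \(v\) blue.} If \(s\in\{R_0,R_1,R_2\}\), the bridge joins vertices of different colors. It creates no blue or red edge, so no constraint arises at \(r\) and nothing is contributed at \(v\). But \(r\) stops being exceptional once \(H\) is attached, and it gains no red neighbor. So \(r\) must already have a red neighbor in \(H\), which excludes exactly \(R_0\). If \(s=B_1\), the bridge gives \(r\) a second blue neighbor, violating \(\Delta(G_{\B})\le1\); this is forbidden. If \(s=B_0\), then \(r\) ends with exactly one blue neighbor, which is legal, and \(v\) gains one blue neighbor. In every subcase the outcome is determined by \(s\).

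\emph{Case \(v\) red.} If \(s\in\{B_0,B_1\}\), the bridge is bichromatic. The blue degree of \(r\) is unchanged, and nothing is contributed at \(v\). If \(s\in\{R_0,R_1,R_2\}\), the bridge is a red edge, so \(r\) gains a red neighbor and the defect at \(r\) disappears in every case. A simple red path through the bridge has the form \(P\,v\,r\,Q\), where \(P\) lies outside \(H\) and \(Q\) lies inside \(H\) starting at \(r\). Such a path can be extended into \(H\) by at most \(\depth(r)\) edges, and this maximum is attained. Hence the red arm that \(H\) contributes at \(v\) has length \(1+\depth(r)\), namely \(1,2,3\) for \(R_0,R_1,R_2\). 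An arm of length \(3\) is already a red path with three edges starting at \(v\), so \(R_2\) is forbidden. For the same reason the path \(v\,r\,Q\) imposes the requirement \(1+\depth(r)\le2\) inside \(H\), which again depends only on \(s\). The remaining interaction, red paths \(P\,v\,r\,Q\) with \(P\) nontrivial, depends only on the arm length \(1+\depth(r)\) together with data at \(v\). This is exactly the contribution recorded in Table~\ref{tab:bridge}.

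\emph{Main obstacle.} The point needing care is the claim that the red-path constraint across the bridge depends only on \(\depth(r)\) and not on the finer shape of \(H\). We justify it by the internal disjointness of \(H\) from the rest of the graph. A simple path entering \(H\) through \(r\) can never leave again, since \(vr\) is the unique connecting edge. So its portion inside \(H\) is a simple red path starting at \(r\), whose maximal length is \(\depth(r)\) by definition, and every length up to that maximum is realized by a prefix. Likewise the degree constraint at \(r\) involves only \(r\)'s own blue or red neighbors, which is the information encoded in \(s\). We also check that \(r\)'s other constraints, such as its red-path bound inside \(H\), are unaffected by the attachment except through this single edge. With these points the lemma follows.
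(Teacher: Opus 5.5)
Your proposal is correct and follows essentially the same route as the paper's proof. Both arguments reduce everything to \(r\), \(v\) and red paths crossing \(vr\), using that \(vr\) is the only edge between \(H\) and the rest of the graph. Both then run the case analysis on the colors of \(v\) and \(r\), with the crossing red arm of length exactly \(1+\depth(r)\), which recovers Table~\ref{tab:bridge}.
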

\begin{proof}
The bridge is the only edge between \(H\) and the rest of the graph, and no vertex changes color, so crumby-admissibility can fail, and same-color data can change, only at \(v\) and \(r\). 

Two observations reduce everything at \(r\) to \(s\).  First, the blue datum at \(r\) is its blue degree, recorded by \(s\in\{B_0,B_1\}\). Second, a simple red path that enters \(H\) does so along \(vr\) and continues from \(r\) inside \(H\); it cannot leave \(H\) again and cannot revisit \(r\), so its length is at most \(1+\depth(r)\), with equality attained along a longest simple red path from \(r\), and \(\depth(r)\) is recorded by \(s\in\{R_0,R_1,R_2\}\).

Now take the four color pairs. If \(v\) and \(r\) are both blue, \(vr\) is a blue edge and raises both blue degrees by one: legality at \(r\) is exactly \(s=B_0\), and the contribution at \(v\) is one blue neighbor. If the colors differ, \(vr\) is neither blue nor red and changes no same-color datum; the only constraint is that a red \(r\) attached to a blue \(v\) must already have a red neighbor inside \(H\), which excludes exactly \(s=R_0\), while \(s\in\{R_1,R_2\}\) and \(s\in\{B_0,B_1\}\) contribute nothing at \(v\). If \(v\) and \(r\) are both red, the bridge contributes at \(v\) one red arm of length exactly \(1+\depth(r)\) by the second observation: \(s=R_0\), whose root thereby gains its red neighbor, gives an arm of length \(1\); \(s=R_1\) gives an arm of length \(2\); and for \(s=R_2\) the arm is itself a red path with three edges, so the attachment is illegal. Every quantity used is a function of \(s\) alone, which yields exactly the entries of Table~\ref{tab:bridge}.
\end{proof}
\begin{definition}[Boundary types]\label{def:boundary}
The \emph{boundary type} of a crumby-admissible coloring of an interval \((F,p,q)\) records the colors of \(p\) and \(q\) and the local same-color data at the terminals. There are \(31\) types, in four families:
\begin{center}\small
\begin{tabularx}{\textwidth}{@{}l l X r@{}}
\toprule
family & symbol & data & count\\
\midrule
both blue & \(BB_{ij}\), \(i,j\in\{0,1\}\) & blue degrees \(i\) at \(p\), \(j\) at \(q\) & 4\\
one of each & \(BR_{i\alpha}\), \(RB_{\alpha j}\) & blue degree and red depth, with \(i,j\in\{0,1\}\) and \(\alpha\in\{0,1,2\}\) & 12\\
red--red, two components & \(RR^d_{\alpha\beta}\), \(\alpha,\beta\in\{0,1,2\}\) & depths \(\alpha\) at \(p\), \(\beta\) at \(q\) & 9\\
red--red, one component & \(RR^c_\tau\), \(\tau\in\{E,C_p,C_q,P,S,T\}\) & ordered shape of the component & 6\\
\bottomrule
\end{tabularx}
\end{center}
The six ordered connected shapes are: the red edge \(pq\) (\(E\)); a red \(P_3\) with center \(p\), \(q\) a leaf (\(C_p\)), and its mirror (\(C_q\)); a red \(P_3\) with \(p,q\) the two leaves (\(P\)); a red \(K_{1,3}\) with \(p,q\) two of its leaves (\(S\)); and a red triangle through \(p,q\) (\(T\)).
\end{definition}
Every crumby-admissible coloring of a two-terminal interval has exactly one boundary type. Indeed, a blue terminal has blue degree \(0\) or \(1\). A red terminal whose red component avoids the other terminal has depth \(0\), \(1\), or \(2\). Finally, if the two terminals share one red component, that component has at least two vertices, so by the extension of Lemma~\ref{lem:redcomp} above it is one of \(K_2\), \(P_3\), \(K_{1,3}\), \(K_3\); since each terminal has degree at most two in the interval (Definition~\ref{def:objects}), neither terminal can be the center of the \(K_{1,3}\), and the ordered shapes that remain are exactly the six listed. The six connected shapes are drawn in Figure~\ref{fig:redshapes}.
\begin{figure}[ht]
\centering
\begin{tikzpicture}[scale=.93, sub/.style={font=\scriptsize,black!70}]
  \node at (0,1.5) {$E$};
  \node[vred] (e1) at (-.45,0) {\scriptsize$p$};
  \node[vred] (e2) at (.45,0) {\scriptsize$q$};
  \draw[edge,RedV] (e1)--(e2);
  \node[sub] at (0,-1.25) {red edge $pq$};
  \node at (2.6,1.5) {$C_p$};
  \node[vred] (cp0) at (2.2,0) {\scriptsize$p$};
  \node[vred] (cp1) at (3.0,.55) {\scriptsize$q$};
  \node[vred] (cp2) at (3.0,-.55) {};
  \draw[edge,RedV] (cp0)--(cp1);
  \draw[edge,RedV] (cp0)--(cp2);
  \node[sub] at (2.6,-1.25) {center $p$};
  \node at (5.0,1.5) {$C_q$};
  \node[vred] (cq0) at (5.4,0) {\scriptsize$q$};
  \node[vred] (cq1) at (4.6,.55) {\scriptsize$p$};
  \node[vred] (cq2) at (4.6,-.55) {};
  \draw[edge,RedV] (cq0)--(cq1);
  \draw[edge,RedV] (cq0)--(cq2);
  \node[sub] at (5.0,-1.25) {center $q$};
  \node at (7.65,1.5) {$P$};
  \node[vred] (p0) at (6.9,0) {\scriptsize$p$};
  \node[vred] (p1) at (7.65,0) {};
  \node[vred] (p2) at (8.4,0) {\scriptsize$q$};
  \draw[edge,RedV] (p0)--(p1)--(p2);
  \node[sub] at (7.65,-1.25) {leaves of $P_3$};
  \node at (9.95,1.5) {$S$};
  \node[vred] (s0) at (9.95,0) {};
  \node[vred] (s1) at (9.25,-.55) {\scriptsize$p$};
  \node[vred] (s2) at (10.65,-.55) {\scriptsize$q$};
  \node[vred] (s3) at (9.95,.85) {};
  \draw[edge,RedV] (s0)--(s1);
  \draw[edge,RedV] (s0)--(s2);
  \draw[edge,RedV] (s0)--(s3);
  \node[sub] at (9.95,-1.25) {leaves of $K_{1,3}$};
  \node at (12.3,1.5) {$T$};
  \node[vred] (t1) at (11.85,-.35) {\scriptsize$p$};
  \node[vred] (t2) at (12.75,-.35) {\scriptsize$q$};
  \node[vred] (t3) at (12.3,.45) {};
  \draw[edge,RedV] (t1)--(t2)--(t3)--(t1);
  \node[sub] at (12.3,-1.25) {triangle};
\end{tikzpicture}
\caption{The six ordered connected red two-terminal shapes of Definition~\ref{def:boundary}: both terminals are red and lie in one red component.}
\label{fig:redshapes}
\end{figure}
Two separate red depths do not always suffice when the terminals share a red component. Types \(P\) and \(S\) both present a red path of length two leaving each terminal, so they have identical terminal depths; yet adding the terminal edge \(pq\) turns \(P\) into a triangle, which is still crumby, while in \(S\) it creates a red path with three edges, which is forbidden. The two depths alone do not determine the outcome, so the boundary type must record the full ordered shape; this is why the six connected shapes are distinguished.

Table~\ref{tab:red-arms} records the data of the six connected shapes used repeatedly below: the arms at the two terminals, whether \(pq\) is an edge of the shape, and which spare incidences the shape forces to be used.
\begin{table}[ht]
\centering
\caption{Data of the six connected boundary types. An entry \(\{1,1\}\) means two arms of length one. A terminal of red degree two has spent its spare incidence, so its flag is forced to \emph{used}.}
\label{tab:red-arms}
\small
\begin{tabular}{@{}c c c c c@{}}
\toprule
Type & Arms at \(p\) & Arms at \(q\) & Edge \(pq\) & Forced used flags\\
\midrule
\(E\)   & \(\{1\}\)   & \(\{1\}\)   & yes & none\\
\(C_p\) & \(\{1,1\}\) & \(\{2\}\)   & yes & at \(p\)\\
\(C_q\) & \(\{2\}\)   & \(\{1,1\}\) & yes & at \(q\)\\
\(P\)   & \(\{2\}\)   & \(\{2\}\)   & no  & none\\
\(S\)   & \(\{2\}\)   & \(\{2\}\)   & no  & none\\
\(T\)   & \(\{2,2\}\) & \(\{2,2\}\) & yes & at \(p\) and \(q\)\\
\bottomrule
\end{tabular}
\end{table}

We can now state the central lemma. Besides the boundary type, it carries the three structural flags of Definition~\ref{def:objects}: whether the spare incidence at each terminal is still free, written \(e_L,e_R\) (so \(e_L=\text{free}\) means \(\sigma(p)=0\), and likewise at \(q\)), and whether the boundary path between the terminals has length one or at least two, written \(\ell\).
\begin{lemma}[Boundary sufficiency]\label{lem:sufficiency}
Let \((F,p,q)\) be a two-terminal interval carrying a crumby-admissible coloring of boundary type \(\pi\), with structural flags \((e_L,e_R,\ell)\).
\begin{enumerate}[label=\textnormal{(\alph*)},leftmargin=2.1em,itemsep=0.25em]
\item For the unary constructions --- chord addition \textnormal{(O4)}, attachment of a rooted branch of a given root state at a given terminal \textnormal{(O5)}, and root restoration \textnormal{(O2)} --- the type \(\pi\) and the flags determine whether the construction is legal and, when it is, the boundary type or root state of the result.
\item For interval concatenation \textnormal{(O3)}, the two boundary types and the two flag triples determine whether the construction is legal and, when it is, the boundary type and flags of the result.
\end{enumerate}
In particular, two crumby-admissible colorings with the same boundary type and flags are interchangeable under every later construction, whether it is applied to the interval alone or composes it with a second interval.
\end{lemma}
\begin{proof}
Each construction adds edges only at the terminals of its arguments, so it suffices to show, construction by construction, that legality and the boundary data of the result are functions of the recorded types and flags, independently of the colorings realizing them. The structural flags of the result transform mechanically, as recorded in Definition~\ref{def:constructions}, so only the color data need argument. For brevity we call terminal branch attachment (O5) \emph{decoration}.

For a red vertex \(t\) and a red edge \(tu\), the \emph{arm} of \(tu\) at \(t\) is the maximum number of edges of a simple red path starting at \(t\) with first edge \(tu\); thus \(\depth(t)\) is the largest arm at \(t\), or \(0\) if \(t\) has no red neighbor. Call a red terminal \emph{separated} if its red component avoids the other terminal. Two claims show that at a separated terminal the single number \(\depth(t)\) carries all the information the constructions need; at the terminals of a connected type \(RR^c_\tau\), the arms are instead prescribed by the shape \(\tau\), as listed in Table~\ref{tab:red-arms}.

\emph{Claim 1: if \(\sigma(t)=0\), then \(t\) has at most one arm, of length \(\depth(t)\).} Indeed, \(t\) then has degree one in \(F\) (Definition~\ref{def:objects}), so it meets at most one red edge.

\emph{Claim 2: a separated red terminal \(t\) with two arms has \(\sigma(t)=1\), and every construction that reaches \(t\) sees only \(\depth(t)\).} Two arms give \(t\) red degree two, so both incidences of \(t\) are spent and \(\sigma(t)=1\). Then \(t\) is a degree-two vertex of its red component, which has at least two vertices and is therefore one of \(K_2\), \(P_3\), \(K_{1,3}\), \(K_3\) by the extension of Lemma~\ref{lem:redcomp}; of these, only \(P_3\) and \(K_3\) have a vertex of degree two. So either \(t\) is the center of a red \(P_3\), with arms \(\{1,1\}\) and \(\depth(t)=1\), or \(t\) lies on a red triangle, with arms \(\{2,2\}\) and \(\depth(t)=2\); the triangle contains no red three-edge path, its two arms at \(t\) overlapping instead of concatenating. Since \(\sigma(t)=1\), chord addition and decoration do not apply at \(t\), and the two constructions that still reach \(t\), concatenation and root restoration, each attach exactly one new edge at \(t\). A new simple red path beginning with that edge enters the red component of \(t\) and cannot return to \(t\), so within that component its length is exactly \(1+\depth(t)\), whichever arm it follows. Hence legality and outcome depend on \(\depth(t)\) alone. This proves the claim, and we take the constructions in turn, deriving the complete list of rules for each.

\smallskip\noindent\emph{Concatenation (O3).}
Let \(F_1\) (terminals \(p_1,q_1\), type \(\pi_1\)) and \(F_2\) (terminals \(p_2,q_2\), type \(\pi_2\)) be concatenated by identifying \(q_1\) with \(p_2\) into one vertex \(w\); the surviving terminals are \(p_1\) and \(q_2\), and (O3) requires the identified terminals to have the same color and \(\sigma(q_1)+\sigma(p_2)\le 1\). The identification creates no new edge: \(w\) simply inherits the edges of \(q_1\) and of \(p_2\). Hence every newly created same-color path passes through \(w\), and only \(p_1\), \(w\), and \(q_2\) can change status. Moreover \(F_1\) and \(F_2\) meet only in \(w\), so the red components of \(p_1\) and of \(q_2\) can never merge with each other except through the component of \(w\).

\emph{Blue identification.} The only same-color datum that changes is the blue degree of \(w\), which becomes the sum of the blue degrees recorded at \(q_1\) and \(p_2\); the step is legal exactly when this sum is at most one. The data at \(p_1\) and \(q_2\) are untouched, and no red edge is created anywhere, so the resulting type consists of the datum of \(\pi_1\) at \(p_1\) and the datum of \(\pi_2\) at \(q_2\) (concatenation is denoted by $*$):
\begin{itemize}[leftmargin=1.4em,itemsep=0.1em]
\item \(BB_{i,j_1}\ast BB_{i_2,j}\mapsto BB_{ij}\), \quad \(BB_{i,j_1}\ast BR_{i_2,\beta}\mapsto BR_{i\beta}\),
\item \(RB_{\alpha,j_1}\ast BB_{i_2,j}\mapsto RB_{\alpha j}\), \quad \(RB_{\alpha,j_1}\ast BR_{i_2,\beta}\mapsto RR^d_{\alpha\beta}\),
\end{itemize}
in each case legal exactly when \(j_1+i_2\le 1\). A surviving red terminal stays separated: its component lies in one \(F_i\) and avoids the blue vertex \(w\).

\emph{Red identification.} Write \(a\) and \(b\) for the largest arms at \(q_1\) in \(F_1\) and at \(p_2\) in \(F_2\): for a separated terminal this is the recorded depth (Claims~1 and~2), for a connected type it is read from Table~\ref{tab:red-arms}. Two conditions decide legality. First, \(w\) is now interior, so it must have a red neighbor: this fails exactly when \(a=b=0\). Second, a new simple red path through \(w\) is an arm in \(F_1\) followed by an arm in \(F_2\); these are internally disjoint, so the longest new path has length \(a+b\), and the step is legal exactly when \(a+b\le 2\). Paths avoiding \(w\) are old, so no other violation can arise. For the resulting type, the red components of \(q_1\) and \(p_2\) merge into the component of \(w\); the surviving terminal \(p_1\) lies in it if and only if \(\pi_1\) is a connected type, and symmetrically for \(q_2\). Three cases arise.
\begin{itemize}[leftmargin=1.4em,itemsep=0.1em]
\item \emph{Neither \(\pi_1\) nor \(\pi_2\) is connected.} The components of \(p_1\) and \(q_2\) are untouched, so both terminals keep their data, and the result consists of the datum of \(\pi_1\) at \(p_1\) and of \(\pi_2\) at \(q_2\): \(BR_{i,a}\ast RB_{b,j}\mapsto BB_{ij}\), \ \(BR_{i,a}\ast RR^d_{b\beta}\mapsto BR_{i\beta}\), \ \(RR^d_{\alpha a}\ast RB_{b,j}\mapsto RB_{\alpha j}\), \ \(RR^d_{\alpha a}\ast RR^d_{b\beta}\mapsto RR^d_{\alpha\beta}\), each legal under the two conditions above.
\item \emph{Exactly one of them is connected, say \(\pi_1\) (the other case is its mirror).} Then \(p_1\) joins the component of \(w\), while \(q_2\) keeps its datum and stays separated from \(p_1\). A longest red path from \(p_1\) either stays inside the old shape, giving its depth \(d\) there, or reaches \(w\) and continues along the \(F_2\)-side arm, giving \(\lambda+b\), where \(\lambda\) is the length of a longest simple red path from \(p_1\) to \(w\) inside the shape; the new depth of \(p_1\) is therefore \(\max\{d,\,\lambda+b\}\). With the values of Table~\ref{tab:red-arms} and the legality bound \(a+b\le 2\), this evaluates to: \(1+b\) for \(\pi_1=E\) (\(d=\lambda=1\)); \(1\) for \(\pi_1=C_p\) (\(d=\lambda=1\), and \(a=2\) forces \(b=0\)); and \(2\) for \(\pi_1\in\{C_q,P,S,T\}\). Indeed, for \(C_q\) we have \(d=2\) and \(\lambda=1\), so \(\max\{2,1+b\}=2\); for \(P\), \(S\), and \(T\) we have \(d=2\) and \(\lambda=2\), and there \(a=2\) forces \(b=0\), so \(\lambda+b=2\) as well. The result is \(RR^d_{\delta\beta}\) or \(RB_{\delta j}\) with \(\delta\) this new depth.
\item \emph{Both are connected.} Now \(a,b\ge 1\), so legality forces \(a=b=1\), which by Table~\ref{tab:red-arms} means \(\pi_1\in\{E,C_q\}\) and \(\pi_2\in\{E,C_p\}\); every other pair creates a red path of length \(a+b\ge 3\). The pair \((C_q,C_p)\) is structurally unavailable: both identified terminals have red degree two, so \(\sigma(q_1)=\sigma(p_2)=1\), violating (O3). The three legal pairs merge the two shapes at \(w\): \(E\ast E\) gives the red path \(p_1\,w\,q_2\), type \(P\); \(E\ast C_p\) gives the red star with center \(w\) and leaves \(p_1\), \(q_2\), and the old leaf, type \(S\); \(C_q\ast E\) gives the same star, type \(S\). In each case the longest path through \(w\) is \(1+1=2\), confirming legality.
\end{itemize}
Every quantity used is part of \(\pi_1\) and \(\pi_2\), which proves (b).

\smallskip\noindent\emph{Chord addition (O4).}
The construction requires \(\sigma(p)=\sigma(q)=0\) and \(\ell \ge 2\), so by Claim~1 each terminal carries at most one arm, of length its depth, and \(pq\) is a genuinely new edge, which changes same-color data only at \(p\) and \(q\). The complete rules:
\begin{itemize}[leftmargin=1.4em,itemsep=0.1em]
\item \(BB_{00}\mapsto BB_{11}\): the chord is a blue edge and raises both blue degrees by one; every other \(BB_{ij}\) is forbidden, since a terminal would reach blue degree two.
\item \(BR_{i\alpha}\mapsto BR_{i\alpha}\) and \(RB_{\alpha j}\mapsto RB_{\alpha j}\), always legal: the chord is bichromatic and changes no same-color datum.
\item \(RR^d_{\alpha\beta}\): the chord joins the two components and creates a red path of length \(\alpha+1+\beta\) through \(pq\), so legality requires \(\alpha+\beta\le 1\). This leaves \(RR^d_{00}\mapsto RR^c_E\), \(RR^d_{10}\mapsto RR^c_{C_p}\), and \(RR^d_{01}\mapsto RR^c_{C_q}\): the merged component is the edge \(pq\), respectively a \(P_3\) centered at the terminal of depth one.
\item \(RR^c_\tau\) with \(\tau\in\{E,C_p,C_q,T\}\): unavailable. These shapes contain the edge \(pq\) (Table~\ref{tab:red-arms}), and an existing edge \(pq\) forces \(\ell=1\) or both spare incidences used, so no realization of these types meets the availability conditions.
\item \(RR^c_P\mapsto RR^c_T\): the chord closes the path \(p\,c\,q\) into a red triangle, whose longest simple path has two edges. \(RR^c_S\) is forbidden: with \(c\) the center and \(w\) the third leaf of the star, \(w\,c\,p\,q\) would be a red three-edge path.
\end{itemize}

\smallskip\noindent\emph{Decoration (O5).}
Attaching a rooted branch of root state \(s\) at a terminal \(t\) requires \(\sigma(t)=0\), so by Claim~1 the red structure at \(t\) is a single arm of length \(\depth(t)\). By Lemma~\ref{lem:bridgestate}, the bridge contributes at \(t\) precisely what Table~\ref{tab:bridge} prescribes for a child in state \(s\), independently of the coloring realizing \(s\); the incumbent arm and the contributed arm are internally disjoint, the latter lying in the attached branch, so the longest red path through \(t\) is their sum. The other terminal is untouched. The complete rules:
\begin{itemize}[leftmargin=1.4em,itemsep=0.1em]
\item \(t\) blue with blue degree \(i\): \(s=B_0\) is legal exactly when \(i=0\), raising the blue degree to one. \(s=B_1\) is forbidden: the child root would gain a second blue neighbor. \(s\in\{R_1,R_2\}\) is legal and changes nothing, the bridge being bichromatic. \(s=R_0\) is forbidden: the child root, no longer exceptional, would stay red with no red neighbor.
\item \(t\) red and separated, of depth \(\alpha\): \(s\in\{B_0,B_1\}\) is legal and changes nothing. \(s=R_0\) contributes an arm of length one: legal exactly when \(\alpha+1\le 2\), that is \(\alpha\le 1\), and the new depth is \(1\); for \(\alpha=0\) both \(t\) and the child root thereby gain their red neighbor. \(s=R_1\) contributes an arm of length two: legal exactly when \(\alpha=0\), and the new depth is \(2\). \(s=R_2\) is forbidden: its arm is a red three-edge path. The terminal stays separated, so the family (\(BR\), \(RB\), or \(RR^d\)) is preserved and only the depth is updated.
\item \(t\) red in a connected type: \(s\in\{B_0,B_1\}\) is legal and changes nothing. A red child contributes an arm of length at least one next to the incumbent arms of Table~\ref{tab:red-arms}, so it is legal only where the largest incumbent arm is one: at a terminal of \(E\), with \(s=R_0\), turning \(E\) into \(C_p\) at \(p\) and into \(C_q\) at \(q\); everywhere else (arm two, or flag forced used at the center of \(C_p\), \(C_q\), and at the terminals of \(T\)) no red child is legal.
\end{itemize}

\smallskip\noindent\emph{Root restoration (O2).}
Adding a new vertex \(x\) adjacent to both terminals closes the interval into a branch rooted at \(x\); the root is now the only exceptional vertex, the terminals become interior, and the edge \(xt\) is red precisely when both \(x\) and the terminal \(t\) are red. A red \(x\) meets the arms of both sides, so the longest red path through \(x\) joins them. The complete rules:
\begin{itemize}[leftmargin=1.4em,itemsep=0.1em]
\item \(BB_{ij}\): a blue \(x\) would have two blue neighbors, so \(x\) must be red; both edges are then bichromatic and \(x\) is red with no red neighbor, the allowed defect at the root. Legal for all \(i,j\), with root state \(R_0\).
\item \(BR_{i\alpha}\) (and similarly \(RB_{\alpha j}\)): a blue \(x\) makes \(xt\) blue at the blue terminal, so it requires \(i=0\), and the red terminal, now interior, must have \(\alpha\ge 1\); this gives \(B_1\). A red \(x\) creates the red path of length \(1+\alpha\) from \(x\), so it requires \(\alpha\le 1\) and gives \(R_{\alpha+1}\); for \(\alpha=0\) the terminal thereby gains its red neighbor. Both outcomes are collected when both apply.
\item \(RR^d_{\alpha\beta}\): a blue \(x\) adds no blue edge and gives \(B_0\), but both terminals become interior, so it requires \(\alpha,\beta\ge 1\). A red \(x\) creates a red path of length \(\alpha+\beta+2\) through \(x\), so it requires \(\alpha=\beta=0\) and gives \(R_1\). For \(\alpha=0\), \(\beta\ge 1\) (and its mirror) there is no legal restoration.
\item \(RR^c_\tau\): a blue \(x\) is always legal, since both terminals are red with depth at least one (Table~\ref{tab:red-arms}), and gives \(B_0\). A red \(x\) closes the shape into a cycle through \(p,x,q\): for \(\tau=E\) this is a red triangle, whose longest simple path has two edges, giving \(R_2\); for every other shape a red three-edge path arises (for \(C_p\): leaf--\(p\)--\(x\)--\(q\); for \(P\): \(c\,p\,x\,q\); for \(S\): \(w\,c\,p\,x\); for \(T\): \(z\,p\,x\,q\); and \(C_q\) mirrors \(C_p\)), so a red \(x\) is illegal.
\end{itemize}

In each construction, legality and the resulting boundary type or root state are read off from the recorded types and flags alone, which is the assertion of the lemma. The calculation of Appendix~\ref{app:artifact} re-derives exactly these lists from colored representatives and checks them entry by entry.
\end{proof}

By Lemmas~\ref{lem:bridgestate} and \ref{lem:sufficiency}, each construction induces a well-defined operation on boundary types and root states. Section~\ref{sec:finite} defines these operations and reduces Theorem~\ref{thm:main} to a finite closure statement, Lemma~\ref{lem:verification}, whose mechanical verification Appendix~\ref{app:artifact} documents.

\section{Finite witness and induction}\label{sec:finite}
For a two-terminal interval \(F\), let \(\real(F)\subseteq\calP\) be the set of boundary types realized by crumby-admissible colorings of \(F\), where \(\calP\) denotes the set of \(31\) boundary types of Definition~\ref{def:boundary}; for a rooted branch \(G\), let \(\real(G)\subseteq\calR=\{B_0,B_1,R_0,R_1,R_2\}\) be the set of root states realized by crumby-admissible colorings of \(G\). Recall from Lemma~\ref{lem:sufficiency} the \emph{structural type} \(\theta=(e_L,e_R,\ell)\) of an interval, which records whether the terminal spare incidences are free and whether the boundary path has length one or at least two. It is independent of the coloring, only governs which constructions are available, and transforms mechanically under them:
\[
\begin{aligned}
\text{left decoration (O5):} &\quad (e_L,e_R,\ell)\mapsto(\text{used},e_R,\ell),\\
\text{right decoration (O5):} &\quad (e_L,e_R,\ell)\mapsto(e_L,\text{used},\ell),\\
\text{chord addition (O4):} &\quad (\text{free},\text{free},{\ge}2)\mapsto(\text{used},\text{used},{\ge}2),\\
\text{concatenation (O3):} &\quad (e_L,e_R,\ell),\,(e_L',e_R',\ell')\mapsto(e_L,e_R',{\ge}2)\quad\text{when not both } e_R,e_L' \text{ are used},\\
\text{root restoration (O2):} &\quad (e_L,e_R,\ell)\mapsto\text{a branch (no interval type).}
\end{aligned}
\]
The integer encoding of these flags in the witness file is given in Appendix~\ref{app:artifact}.

By Lemmas~\ref{lem:bridgestate} and \ref{lem:sufficiency}, each construction induces a set-valued operation on these data, obtained by applying the construction to colorings realizing the input types and recording the resulting type whenever the result is crumby-admissible; the outcome depends only on the input types and flags, never on the chosen colorings.
\begin{itemize}[leftmargin=1.4em]
\item \(\Ser(A_1,A_2)\subseteq\calP\), for \(A_1,A_2\subseteq\calP\): the types obtained by concatenating (O3) representatives of \(\pi_1\in A_1\) and \(\pi_2\in A_2\) at a shared terminal, when the identified colors agree and the result is crumby-admissible.
\item \(\Chord(A)\subseteq\calP\), for \(A\subseteq\calP\): the types obtained by adding the terminal edge (O4) to a representative of \(\pi\in A\), in the permitted structural situation.
\item \(\Dec_L(A,D),\ \Dec_R(A,D)\subseteq\calP\), for \(A\subseteq\calP\), \(D\subseteq\calR\): the types obtained by attaching a branch of root state \(s\in D\) at the left, respectively right, terminal of a representative of \(\pi\in A\) (O5).
\item \(\Root(A)\subseteq\calR\), for \(A\subseteq\calP\): the root states obtained by adding a new root adjacent to both terminals of a representative of \(\pi\in A\) (O2).
\item \(\Tree_k(D_1,\dots,D_k)\subseteq\calR\), \(k\in\{0,1,2\}\): the root states of a new root with \(k\) child branches whose root states are chosen from the \(D_i\) (O1), under the bridge-attachment rule of Table~\ref{tab:bridge} and Lemma~\ref{lem:bridgestate}. \(\Tree_0\) is the set of root states of the one-vertex branch.
\end{itemize}
These operations are monotone in each argument: a larger input set can only enlarge the set of outcomes, since every legal choice from the smaller sets is still a legal choice from the larger ones. This is the only property of the operations, beyond Lemmas~\ref{lem:bridgestate} and \ref{lem:sufficiency}, used in the induction below.

The induction does not need the exact realized sets. It is enough to keep, for each object, a nonempty subset of its realized set, a \emph{finite witness}: a witness may discard some realizable types, but, being a subset of the realized set, it never names a type that is not actually realized. Accordingly, in the closure conditions below a stored witness is accepted only when it is \emph{contained in} the set of outcomes produced from already-certified witnesses of the inputs; the induction uses this one-directional containment and nothing else.
\begin{definition}[Finite witness family]\label{def:family}
A \emph{finite witness family} is a finite collection \(\calB\) of nonempty subsets of \(\calR\) together with, for each structural type \(\theta\), a finite collection \(\calC_\theta\) of nonempty subsets of \(\calP\), subject to the closure conditions:
\begin{enumerate}[label=\textnormal{(C\arabic*)},leftmargin=1.6em]
\item the one-vertex branch has a witness in \(\calB\);
\item for every \(D\in\calB\), \(\Tree_1(D)\) contains a member of \(\calB\); for all \(D_1,D_2\in\calB\), \(\Tree_2(D_1,D_2)\) contains a member of \(\calB\);
\item the single-edge interval has a witness in \(\calC_\theta\) for \(\theta=(\text{free},\text{free},1)\);
\item for every permitted decoration of \(C\in\calC_\theta\) by \(D\in\calB\), the decoration image contains a member of \(\calC_{\theta'}\), with \(\theta'\) the type after the decorated incidence is used;
\item for every permitted chord addition to \(C\in\calC_\theta\), \(\Chord(C)\) contains a member of \(\calC_{\theta'}\);
\item for every permitted concatenation of \(C_1\in\calC_{\theta_1}\) and \(C_2\in\calC_{\theta_2}\), \(\Ser(C_1,C_2)\) contains a member of \(\calC_{\theta'}\);
\item for every \(C\in\calC_\theta\), \(\Root(C)\) contains a member of \(\calB\).
\end{enumerate}
\end{definition}

The single edge of condition (C3) realizes exactly the four boundary types
\[
\{BB_{11},\ BR_{00},\ RB_{00},\ RR^c_E\},
\]
according to whether its two endpoints are colored blue--blue, blue--red, red--blue, or red--red; this is the base profile set against which (C3) is checked. Conditions \textnormal{(C4)--(C7)} refer to the target structural types displayed above.
\begin{lemma}[Finite induction]\label{lem:induction}
If a finite witness family satisfies \textnormal{(C1)--(C7)}, then every two-terminal interval \(F\) generated by the decomposition has a witness \(C\in\calC_{\theta(F)}\) with \(C\subseteq\real(F)\), and every generated rooted branch \(G\) has a witness \(D\in\calB\) with \(D\subseteq\real(G)\).
\end{lemma}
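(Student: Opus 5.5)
We argue by structural induction along the generation of Proposition~\ref{prop:decomp}, proving both assertions simultaneously. Every generated object is obtained from the base objects by a finite sequence of applications of (O1)--(O5). We therefore induct on the length of a generating derivation, or equivalently on the pair (number of vertices, boundary length) used in that proof. For each construction we show that the witnesses certified for the inputs produce, through the corresponding closure condition, a certified witness for the output. The key tool is a \emph{soundness} property of the operations: if \(A\subseteq\real(F)\) for an interval \(F\), and \(D\subseteq\real(G)\) for a branch \(G\), then \(\Chord(A)\subseteq\real(F+pq)\), \(\Dec_L(A,D)\subseteq\real(\text{decorated }F)\), \(\Root(A)\subseteq\real(\text{restored branch})\), and similarly for \(\Ser\) and \(\Tree_k\).

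To prove soundness, fix an outcome \(\pi'\) of, say, \(\Chord(A)\). By definition it arises from some \(\pi\in A\) through a representative coloring of type \(\pi\). Since \(\pi\in\real(F)\), the actual interval \(F\) has a crumby-admissible coloring \(c\) of type \(\pi\) with the structural flags \(\theta(F)\). By Lemma~\ref{lem:sufficiency}(a), legality and the resulting type depend only on \(\pi\) and the flags, never on the realizing coloring. Applying the construction to \(c\) therefore yields a crumby-admissible coloring of the new interval of type \(\pi'\), so \(\pi'\in\real\) of the result. The same argument applies to concatenation via Lemma~\ref{lem:sufficiency}(b), where the two realizing colorings are combined on the two pieces that share only \(w\). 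It applies to decoration and to (O1) via Lemma~\ref{lem:bridgestate}: the coloring of the attached branch realizing state \(s\) is glued in, and the rest of the branch is unaffected. For \(\Tree_k\), a new root with children in realized states \(s_i\) is colored per Table~\ref{tab:bridge}. We also check that the structural type of the output is the one named in (C4)--(C6), using the mechanical flag transformations listed above.

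The induction itself then runs as follows. In the base cases, (C1) and (C3) supply witnesses. For (C3), the single edge realizes exactly \(\{BB_{11},BR_{00},RB_{00},RR^c_E\}\), so any member of \(\calC_{(\text{free},\text{free},1)}\) contained in this set is a subset of \(\real\). For (C1), \(\Tree_0\) equals the realized set \(\{B_0,R_0\}\) of a single vertex. In the inductive step, suppose an object arises by a construction from inputs that have certified witnesses \(C\subseteq\real(F)\), \(D\subseteq\real(G)\), and so on. The construction is permitted on the objects, hence on their structural types, so the relevant closure condition (C2) or (C4)--(C7) provides a stored witness \(W\) contained in the operation applied to the stored input witnesses. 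By soundness and monotonicity this operation value is a subset of the realized set of the output, so \(W\subseteq\real\) of the output, and \(W\) is nonempty and lies in \(\calB\) or \(\calC_{\theta'}\) as required.

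The main obstacle is soundness for concatenation and for (O1) with two children, where two independent colorings must be glued. We must verify that gluing along a single shared vertex, or along bridges, creates no violations beyond those checked by the type rules. This is exactly the content of Lemmas~\ref{lem:bridgestate} and~\ref{lem:sufficiency}. A secondary point needs care: every intermediate object in a derivation must itself be generated in the stated form, so that the hypothesis applies to it. This follows because the derivation in Proposition~\ref{prop:decomp} is built from generated sub-objects.
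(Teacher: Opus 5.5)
Your proposal is correct and follows the paper's proof. It is the same simultaneous induction over the decomposition of Proposition~\ref{prop:decomp}: base cases come from (C1) and (C3), and in each inductive step the certified input witnesses are pushed through the induced operation, whose image lies in the realized set of the output by Lemmas~\ref{lem:bridgestate} and~\ref{lem:sufficiency}, before taking the stored witness supplied by the matching closure condition. The only cosmetic difference is that you prove soundness directly for arbitrary subsets $A\subseteq\real(F)$, which makes your extra appeal to monotonicity redundant, whereas the paper applies the two lemmas to the full realized sets and then uses monotonicity.
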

\begin{proof}
Simultaneous induction over the decomposition of Proposition~\ref{prop:decomp}. The base cases are (C1) and (C3): the realized sets of the one-vertex branch and of the single-edge interval are \(\Tree_0=\{B_0,R_0\}\), the one vertex being colorable blue or red, and the base profile set listed above, and (C1) and (C3) supply family witnesses contained in them. For each construction we combine monotonicity with Lemmas~\ref{lem:bridgestate} and \ref{lem:sufficiency}: applying an operation to witnesses of the inputs yields a subset of the operation applied to the full realized sets, which by those lemmas lies in the realized set of the output.
\emph{Concatenation.} With \(C_1\subseteq\real(F_1)\), \(C_2\subseteq\real(F_2)\) by induction, monotonicity gives \(\Ser(C_1,C_2)\subseteq\Ser(\real(F_1),\real(F_2))\subseteq\real(F)\); (C6) supplies \(C'\in\calC_{\theta(F)}\) with \(C'\subseteq\Ser(C_1,C_2)\subseteq\real(F)\). \emph{Chord addition} uses (C5) and \(\Chord(C)\subseteq\real(F')\) in the same way. \emph{Decoration} uses (C4) with \(\Dec_L(C,D)\subseteq\real(F')\) (and \(\Dec_R\) symmetrically). \emph{Tree formation} uses (C2): \(\Tree_1(D_1)\subseteq\real(G)\), and likewise \(\Tree_2(D_1,D_2)\subseteq\real(G)\). \emph{Root restoration} uses (C7): \(\Root(C)\subseteq\real(G)\). Each supplies a family witness inside the realized set of the output, completing the induction.
\end{proof}
It remains to exhibit a family satisfying \textnormal{(C1)--(C7)}, the only point at which a finite calculation enters.
\begin{lemma}[Finite verification]\label{lem:verification}
There is an explicit finite witness family satisfying the closure conditions \textnormal{(C1)--(C7)}, and every branch witness in it meets the final-legal set \(\FL\) of \eqref{eq:finallegal}.
\end{lemma}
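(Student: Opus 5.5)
The plan is to construct the family $\calB$, $(\calC_\theta)_\theta$ as the least fixed point of a saturation procedure, run mechanically, and then check (C1)--(C7) and the $\FL$ condition on the result. The operations $\Ser$, $\Chord$, $\Dec_L$, $\Dec_R$, $\Root$, $\Tree_k$ are first implemented as explicit finite tables on the $31$ boundary types, the five root states and the structural flags. These tables are exactly the rule lists derived in the proof of Lemma~\ref{lem:sufficiency} and in Table~\ref{tab:bridge}, and they are cross-checked against brute-force colorings of small representatives.

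The saturation proceeds as follows.
\begin{enumerate}[label=\textnormal{(\arabic*)},leftmargin=2em,itemsep=0.2em]
\item Seed the family with $\Tree_0=\{B_0,R_0\}$ in $\calB$ and with the base profile set $\{BB_{11},BR_{00},RB_{00},RR^c_E\}$ in $\calC_{(\text{free},\text{free},1)}$.
\item Repeatedly apply every permitted construction to all members and pairs of members, and insert each outcome set into the appropriate collection, recording the target structural type.
\item Stop when nothing new appears.
\end{enumerate}
Termination is automatic, since there are at most $2^{31}$ subsets per structural type and $2^5$ subsets of $\calR$. In practice, however, the reachable family should be small.

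Because each stored set is literally an outcome set, closure holds with equality, which is stronger than the containment the definition requires. The one exception is an empty outcome. If some operation produced the empty set, the conditions would fail and the graph class would contain a counterexample assembled along that derivation. So the substantive check is that every outcome set is nonempty, and that every member of $\calB$ meets $\FL$.

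Blow-up may make it necessary to shrink the family. To do so, I would replace each outcome set by a minimal-under-inclusion representative that is still closed, choosing smaller witnesses wherever (C2)--(C7) still hold, since the conditions only ask that an outcome \emph{contain} a member. The search would run as a greatest-fixed-point pruning. Start from the saturated family, and repeatedly delete any witness whose removal still leaves, for every construction producing it, some other member contained in that outcome. Independently of this, a stand-alone checker would then reverify (C1)--(C7) and the $\FL$ condition directly from the stored list.

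The main obstacle is nonemptiness together with the $\FL$ condition. A priori one might fear that some interval has only types such as $RR^d_{0\beta}$ with $\beta\ge1$, which admit no restoration, or that some branch witness is $\{R_0\}$ alone, which is illegal at the top. The local arguments suggest this does not happen. A one-vertex branch already contains $B_0$, and $\Root$ of any $BB$ type yields $R_0$ while every $RR^c$ type yields $B_0$. Still, the interplay of chords and depth-2 arms is not transparently controlled by hand, so I would rely on the exhaustive fixed-point computation for this part. If the computation ever produced a bad set, I would first enlarge the state (for example with extra flags) before suspecting the conjecture.
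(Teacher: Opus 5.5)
Your plan is essentially the paper's: the paper's certificate is exactly the least-fixed-point family your saturation produces ($18$ branch witnesses and $1{,}884$ interval witnesses, used without any pruning), and it is confirmed by a stand-alone checker that recomputes each transition from colored representatives and tests (C1)--(C7) together with every branch witness meeting $\FL$. The one thing your proposal leaves open is the actual content of the lemma, namely the outcome of that computation: the paper settles it by supplying the explicit certificate and reporting the checker's successful run, whereas you only expect that no empty or $\FL$-avoiding set will arise.
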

\begin{proof}[Computer-assisted proof]
The family consists of \(18\) branch witnesses and \(1{,}884\) interval witnesses. It is the explicit content of the lemma, the collections \(\calB\) and \(\{\calC_\theta\}_{\theta}\)  of Definition~\ref{def:family}, recorded as integer bitmasks in the supplied file \texttt{crumby\_outerplanar\_certificate\_states.json}. This machine-checkable certificate is the object that witnesses the lemma, and the program of Appendix~\ref{app:artifact} only checks it. Establishing \textnormal{(C1)--(C7)} and the final-legality of the branch witnesses amounts to checking finitely many inclusions: one constructs colored representatives of the \(31\) boundary types and the five root states, applies each construction to them as in Lemmas~\ref{lem:bridgestate} and \ref{lem:sufficiency}, reclassifies each crumby-admissible result, and confirms that every required outcome set contains one of the listed witnesses. There are finitely many constructions and finitely many structural situations, so the verification is finite. The checker of Appendix~\ref{app:artifact} performs exactly these checks (the closure conditions \textnormal{(C1)--(C7)}, together with the requirement that every branch witness meet \(\FL\)) and reports \texttt{profiles=31}, \texttt{branch\_signatures=18}, \texttt{interval\_certificates=1884}, and \texttt{OK: finite crumby-outerplanar certificate verified.} (the full session is in Appendix~\ref{app:artifact}). Since these obligations are precisely (C1)--(C7) and final-legality, the lemma follows.
\end{proof}

\section{Proof of the main theorem and a remark}\label{sec:proof}
\begin{proof}[Proof of Theorem~\ref{thm:main}]
It suffices to color each connected component, since there are no edges between components: isolated vertices are colored blue, and the union of crumby colorings of the components is crumby.
Let \(G\) be a connected subcubic outerplanar graph with at least one edge. Finite outerplanar graphs are \(2\)-degenerate, so \(G\) has a vertex \(r\) of degree at most two; root \(G\) at \(r\). By Proposition~\ref{prop:decomp}, \(G\) is a rooted branch generated by the decomposition. Apply Lemma~\ref{lem:induction} with the family of Lemma~\ref{lem:verification}: \(G\) has a branch witness \(D\subseteq\real(G)\), and \(D\) contains a final-legal root state \(s\in\{B_0,B_1,R_1,R_2\}\). Since \(D\subseteq\real(G)\), the state \(s\) is realized by a crumby-admissible coloring of \(G\).
This coloring is crumby on the unrooted graph \(G\). The blue part has maximum degree at most one and the red part has no simple path with three edges, by crumby-admissibility. Crumby-admissibility also gives every red vertex except possibly \(r\) a red neighbor; and \(s\neq R_0\), so if \(r\) is red then \(s\in\{R_1,R_2\}\) and \(r\) too has a red neighbor. Hence every red vertex has a red neighbor, and the coloring is crumby. Coloring each component this way completes the proof.
\end{proof}
The boundary order supplied by an outerplane embedding is what makes the argument work: it is the linear order along the outer boundary that turns each cyclic block into a two-terminal interval with finite memory. Without it, the natural next class is the bipartite one, and Bar\'at, Bl\'azsik and Dam\'asdi ask whether every subcubic bipartite graph admits a crumby coloring \cite[Conjecture~3]{BBD}. Bipartiteness removes odd structures such as the triangle type \(T\) but supplies no boundary order, so the present method does not apply directly; this remains the natural open direction.
\section*{Supplementary material}
The finite witness family and the stand-alone program that checks it accompany the paper as a supplementary proof bundle, together with a README recording the verification command, the expected output, and the SHA-256 hashes of Appendix~\ref{app:artifact}. The exact files, identified by those hashes, have been deposited in the permanent, DOI-bearing Zenodo archive \texttt{https://doi.org/10.5281/zenodo.21245815}, with the hashes serving as the canonical identifiers of the archived files.

\section*{Use of artificial intelligence}
During the preparation of this work the author used OpenAI's ChatGPT and Anthropic's Claude for drafting, editorial assistance, and help writing the accompanying code. The author reviewed and verified all such output, including every mathematical argument and the computer-assisted verification, and takes full responsibility for the content of the manuscript.
\section*{Acknowledgements}
The author thanks J. Bar\'at, Z. L. Bl\'azsik, and G. Dam\'asdi for their work on crumby colorings and for formulating the outerplanar conjecture resolved here. 

The author is funded by Project No.\ KDP-IKT-2023-900-I1-00000957/0000003 with support provided by the Ministry of Culture and Innovation of Hungary from the National Research, Development and Innovation Fund, financed under the KDP-2023 funding scheme.
\bibliographystyle{abbrvurl}
\bibliography{references}

@article{BBD,
	title = {Crumby colorings — {Red}-blue vertex partition of subcubic graphs regarding a conjecture of {Thomassen}},
	volume = {346},
	issn = {0012-365X},
	url = {https://www.sciencedirect.com/science/article/pii/S0012365X22004873},
	doi = {10.1016/j.disc.2022.113281},
	number = {4},
	urldate = {2026-05-05},
	journal = {Discrete Mathematics},
	author = {Barát, János and Blázsik, Zoltán L. and Damásdi, Gábor},
	month = apr,
	year = {2023},
	pages = {113281},
}

@article{barat_decomposition_2019,
	title = {Decomposition of cubic graphs related to {Wegner}’s conjecture},
	volume = {342},
	issn = {0012-365X},
	url = {https://www.sciencedirect.com/science/article/pii/S0012365X19300366},
	doi = {10.1016/j.disc.2019.01.025},
	number = {5},
	urldate = {2026-05-05},
	journal = {Discrete Mathematics},
	author = {Barát, János},
	month = may,
	year = {2019},
	pages = {1520--1527},
}

@article{BellittoEtAl,
	title = {Counterexamples to {Thomassen}’s {Conjecture} on {Decomposition} of {Cubic} {Graphs}},
	volume = {37},
	issn = {1435-5914},
	url = {https://doi.org/10.1007/s00373-021-02380-z},
	doi = {10.1007/s00373-021-02380-z},
	language = {en},
	number = {6},
	urldate = {2026-05-05},
	journal = {Graphs and Combinatorics},
	author = {Bellitto, Thomas and Klimošová, Tereza and Merker, Martin and Witkowski, Marcin and Yuditsky, Yelena},
	month = nov,
	year = {2021},
	pages = {2595--2599},
}

@article{Thomassen,
	title = {The square of a planar cubic graph is 7-colorable},
	volume = {128},
	issn = {0095-8956},
	url = {https://www.sciencedirect.com/science/article/pii/S0095895617300783},
	doi = {10.1016/j.jctb.2017.08.010},
	urldate = {2026-05-05},
	journal = {Journal of Combinatorial Theory, Series B},
	author = {Thomassen, Carsten},
	month = jan,
	year = {2018},
	pages = {192--218},
}

@techreport{Wegner,
	title = {Graphs with given diameter and a coloring problem},
	author = {Wegner, Gerd},
	institution = {University of Dortmund},
	address = {Dortmund, Germany},
	type = {Technical Report},
	year = {1977},
	url = {http://hdl.handle.net/2003/34440},
	urldate = {2026-05-05},
	language = {en},
}

@article{RSST,
	title = {The four-colour theorem},
	volume = {70},
	issn = {0095-8956},
	url = {https://collaborate.princeton.edu/en/publications/the-four-colour-theorem/},
	doi = {10.1006/jctb.1997.1750},
	language = {English (US)},
	number = {1},
	urldate = {2026-06-18},
	journal = {Journal of Combinatorial Theory. Series B},
	publisher = {Academic Press Inc.},
	author = {Robertson, Neil and Sanders, Daniel and Seymour, Paul and Thomas, Robin},
	month = may,
	year = {1997},
	pages = {2--44},
}

@inproceedings{HeuleKullmannMarek,
	address = {Cham},
	title = {Solving and {Verifying} the {Boolean} {Pythagorean} {Triples} {Problem} via {Cube}-and-{Conquer}},
	isbn = {978-3-319-40970-2},
	doi = {10.1007/978-3-319-40970-2_15},
	language = {en},
	series = {Lecture {Notes} in {Computer} {Science}},
	volume = {9710},
	booktitle = {Theory and {Applications} of {Satisfiability} {Testing} – {SAT} 2016},
	publisher = {Springer International Publishing},
	author = {Heule, Marijn J. H. and Kullmann, Oliver and Marek, Victor W.},
	editor = {Creignou, Nadia and Le Berre, Daniel},
	year = {2016},
	pages = {228--245},
}

@misc{PinterK4,
	title = {Subcubic ${K}_4$-minor-free graphs without crumby colorings},
	url = {http://arxiv.org/abs/2605.04706},
	doi = {10.48550/arXiv.2605.04706},
	urldate = {2026-06-18},
	publisher = {arXiv},
	author = {Pintér, József},
	month = may,
	year = {2026},
	note = {arXiv:2605.04706 [math.CO]},
}
\clearpage
\appendix
\section{Verification artifact}\label{app:artifact}
This appendix documents the accompanying calculation. The program reconstructs the operations of Section~\ref{sec:finite} from colored representatives and confirms that the supplied family satisfies (C1)--(C7). The approach is standard in computer-assisted proofs: publish an explicit witness together with a stand-alone routine that re-checks the witness against stated conditions. The reducibility and unavoidability checks for the four-colour theorem \cite{RSST} and the published checkable witnesses for large unsatisfiability instances \cite{HeuleKullmannMarek} follow the same pattern. Here the witness is the family of Lemma~\ref{lem:verification} and the conditions are (C1)--(C7). Throughout, a stored mask is treated as a \emph{lower} certificate, accepted only when it is contained in the operation image computed from already-certified lower witnesses of the inputs: the same one-directional inclusion the induction of Section~\ref{sec:finite} relies on.
\subsection{Files and reproduction}
The proof bundle contains a witness file \texttt{crumby\_outerplanar\_certificate\_states.json}, a checker \texttt{crumby\_outerplanar\_certificate\_verifier.py}, a \texttt{README.txt} recording the verification command, the expected output, the Python version tested, the certificate format, and the SHA-256 hashes below, and a \texttt{LICENSE} file: the bundle is released under the MIT License (\(\copyright\)~2026 J\'ozsef Pint\'er). The checker uses only the Python standard library and runs under Python~3.10 or later (the reported run used Python~3.13.5, and the check has also been reproduced under Python~3.10.12, the minimum supported version); no package installation is required. A complete run takes a few seconds on commodity hardware. From the directory containing the files, run
\begin{verbatim}
python3 crumby_outerplanar_certificate_verifier.py --quiet
\end{verbatim}
The checker resolves its default witness file next to the script, so the command above runs against the bundled file. The expected output is
\begin{verbatim}
profiles=31
branch_signatures=18
interval_certificates=1884
OK: finite crumby-outerplanar certificate verified.
\end{verbatim}
For stable identification of the reviewed artifact, the SHA-256 hashes of the two files are:
\begin{center}\small
\begin{tabular}{@{}l@{}}
\toprule
File and SHA-256\\
\midrule
\texttt{crumby\_outerplanar\_certificate\_verifier.py}\\
\quad{\scriptsize\ttfamily 6c55bd2651e248f8cf3f616dc9b3875417faa0ffb86dfe4e9713eb5eb7054103}\\
\addlinespace
\texttt{crumby\_outerplanar\_certificate\_states.json}\\
\quad{\scriptsize\ttfamily e8ab93a4fd80cbefdb10e63f3c08163df25f2a57cd177e4cdc96463e3fb974c9}\\
\bottomrule
\end{tabular}
\end{center}

\subsection{Encodings}
Sets of root states and boundary types are stored as integer bitmasks. The five root states \(B_0,B_1,R_0,R_1,R_2\) occupy bits \(0,\ldots,4\), so the final-legal set \eqref{eq:finallegal} is the mask \(0\mathtt{b}11011=27\). The \(31\) boundary-type bits are ordered as follows: bits \(0\)--\(3\) are \(BB_{ij}\) in the order \((00,01,10,11)\); bits \(4\)--\(9\) are \(BR_{i\alpha}\) and bits \(10\)--\(15\) are \(RB_{\alpha j}\), lexicographically; bits \(16\)--\(24\) are \(RR^d_{\alpha\beta}\), lexicographically; bits \(25\)--\(30\) are \(RR^c_E,\ RR^c_{C_p},\ RR^c_{C_q},\ RR^c_P,\ RR^c_S,\ RR^c_T\). Each interval record carries its profile mask together with the structural flags \texttt{eL}, \texttt{eR}, \texttt{length}: a terminal flag is \(0\) for a free and \(1\) for a used spare incidence, and \texttt{length} is \(1\) or \(2\), the latter meaning a boundary path of length at least two. The checker confirms that every stored mask is nonempty and in range and that all flags lie in these ranges.

As a simple worked example, the unique witness of type \((\text{free},\text{free},1)\) is \(\{\texttt{mask}=33555480,\ \texttt{eL}=0,\ \texttt{eR}=0,\ \texttt{length}=1\}\); since \(33555480=2^{3}+2^{4}+2^{10}+2^{25}\), it decodes to \(\{BB_{11},\ BR_{00},\ RB_{00},\ RR^c_E\}\), the base profile set of condition \textnormal{(C3)}.

\subsection{Correspondence with the mathematical operations}
The checker stores no precomputed transition table. For each operation it builds small colored representatives, applies the graph construction, tests crumby-admissibility, and reclassifies the result as a boundary type or root state, as summarized in Table~\ref{tab:code-map}. These representatives are \emph{local} colored models of boundary types, not literal generated interval fragments; Lemmas~\ref{lem:bridgestate} and \ref{lem:sufficiency} are exactly what license computing each transition from one such model per type.

\begin{table}[ht]
\centering
\small
\begin{tabularx}{\textwidth}{@{}>{\raggedright\arraybackslash}p{0.24\textwidth}
                            >{\raggedright\arraybackslash}p{0.31\textwidth}
                            X@{}}
\toprule
Mathematical object or operation & Checker routine & Role \\
\midrule
Type and root-state encodings & \texttt{PROFILES}, \texttt{INDEX}, \texttt{FINAL} & The \(31\) boundary types, the name-to-bit map, and the final-legal states \(B_0,B_1,R_1,R_2\). \\[0.3em]
Representatives & \texttt{representative(i, chord=False/True)} & Colored two-terminal representatives of the \(31\) types; the \texttt{chord=True} form realizes \(BB_{11}\) without a terminal edge for the chord test. \\[0.3em]
Crumby-admissibility and classification & \texttt{valid}, \texttt{classify}; \texttt{has\_red\_p4}, \texttt{red\_depth}, \texttt{red\_component} & Checks the local constraints with the exceptional-vertex allowances and reclassifies each crumby-admissible fragment. \\[0.3em]
Concatenation (O3) & \texttt{series(i,j)}, \texttt{merge}; \texttt{row\_image}, \texttt{series\_image} & Identifies the shared terminal when colors agree and records the resulting type image. \\[0.3em]
Chord addition (O4) & \texttt{chord(i)}; \texttt{unary\_image} & Adds the terminal edge in the permitted structural case and reclassifies. \\[0.3em]
Decoration (O5) & \texttt{decorate(i, side, state)}; \texttt{decoration\_image} & Attaches a branch root state at a terminal whose spare incidence is free. \\[0.3em]
Root restoration (O2) & \texttt{root\_mask(i)} & Adds a new root adjacent to both terminals and records the attainable root states. \\[0.3em]
Tree formation (O1) & \texttt{tree(child\_masks)} & The bridge-contribution rule for zero, one, or two child branches. \\[0.3em]
Parsing and structural types & \texttt{branches}, \texttt{states}, \texttt{groups} & Reads the witnesses and groups interval masks by \((e_L,e_R,\ell)\). \\[0.3em]
Containment checks & \texttt{branch\_lower}, \texttt{interval\_lower} & Accepts an outcome set only when it contains a stored witness of the required type. \\[0.3em]
Driver & \texttt{verify}, \texttt{check} & Rebuilds all operations and checks (C1)--(C7) and final-legality. \\
\bottomrule
\end{tabularx}
\caption{Map from the mathematical objects and operations to the checker routines.}
\label{tab:code-map}
\end{table}
\subsection{Discovery versus verification}
How the witness family was \emph{found} is logically separate from the verification above and is no part of the proof; we record it for transparency. Starting from the realizable sets of the two base objects, one repeatedly applies the induced operations of Section~\ref{sec:finite} and accumulates every boundary type and root state that appears, until none is new. This least-fixed-point iteration terminates because the types are finite, and the \(18\) branch and \(1{,}884\) interval witnesses are its fixed-point sets.
\end{document}